\documentclass[11pt]{article}
\usepackage{epsf}
\usepackage{amsbsy,amsmath}
\usepackage{mathtools}
\usepackage{mathrsfs}
\usepackage{amsfonts}
\usepackage{breqn}
\usepackage{amssymb}
\usepackage{enumitem}
\usepackage{eucal}
\usepackage{graphics,mathrsfs}
\usepackage{amsthm}
\usepackage{secdot}
\oddsidemargin .15in \evensidemargin .25in \textwidth 155mm
\topmargin .01in \textheight 218mm \linespread{1.08}
\parskip = 0.8pt

\newtheorem{theorem}{Theorem}[section]
\newtheorem{lemma}[theorem]{Lemma}

\theoremstyle{definition}
\newtheorem{definition}[theorem]{Definition}

\theoremstyle{remark}

\numberwithin{equation}{section}

\numberwithin{equation}{section}

\newsavebox{\savepar}

\pagestyle{myheadings}
\begin{document}

\title{Three nontrivial solutions of a nonlocal problem involving critical exponent}
\author{Amita Soni and D.Choudhuri}

\date{}
\maketitle

\begin{abstract}
\noindent In this paper we will prove the existence of three nontrivial weak solutions of the following problem involving a nonlinear integro-differential operator and a term with critical exponent. 
\begin{align*}
\begin{split}
-\mathscr{L}_\Phi u & = |u|^{{p_{s}^{\ast}}-2}u+\lambda f(x,u)\,\,\mbox{in}\,\,\Omega,\\
u & = 0\,\, \mbox{in}\,\, \mathbb{R}^N\setminus \Omega,
\end{split}
\end{align*}
Here $q\in(p, p_s^*)$, where $p_s^*$ is the fractional Sobolev
conjugate of $p$ and $-\mathscr{L}_\Phi $ represents a general nonlocal integro-differential operator of order $s\in(0,1)$. This operator is possibly degenerate and covers the case of fractional $p$-Laplacian operator.
\end{abstract}
\begin{flushleft}

{\bf Keywords}:~ Manifold, Concentration-compactness principle, Ekeland variational principle.
\end{flushleft}

\section{Introduction}
In recent years, problems involving critical exponent is gaining a lot of attention because of the difficulty in dealing with the loss of compactness. A fundamental and important tool to deal with such type of loss is concentration-compactness principle by P.L.Lions \cite{Lions}. An application of this principle can be seen in the paper of Azorero et. al \cite{Azorero} who to our knowledge were the first to work in this direction. Here in \cite{Azorero}, the authors have shown the existence of solutions for different cases of $\lambda$ involved in the equation along with multiplicity of solutions of noncritical problem with non symmetric functional. An advancement of this principle can be seen in \cite{Bonder} where authors have obtained concentration-compactness lemma for fractional $p$-Laplacian. An immediate subject of interest to the researchers is about multiplicity of solutions of such kind of problems. One of the methods to get the multiple solutions is based on the concepts involving manifolds. It was Struwe \cite{Struwe} who worked on such problems defined on manifolds to guarantee the existence of three nontrivial weak solutions for a Pseudo-Laplace-operator without critical exponent term. Following the work of Struwe \cite{Struwe}, there are a few articles devoted to the existence of multiplicity of solutions by combining the concept of concentration-compactness lemma and manifolds. An important such contribution can be found in \cite{Napoli}. Here \cite{Napoli} the authors have proved the existence of three nontrivial solutions of problem involving $p$-Laplacian and critical exponent on manifolds. In \cite{Bonder1} and \cite{Silva} the authors have extended this result for $p(x)$ Laplacian operator. The authors in \cite{Cantizano} obtained the same results for the problem involving a fractional $p$-Laplacian with a critical exponent term.

    Motivated by the above papers, in this paper we are going to extend our result in a more general setting, i.e. for a general nonlocal integro-differential operator. It is a generalization owing to the fact that it covers the case of fractional $p$-Laplacian. This type of operator appeared in many applications and models such as in quasi-geostrophic dynamics, nonlocal diffusion and modified porous medium equations, dislocation problems, phase transition models (refer \cite{Caffarelli}, \cite{caf1} and references therein). For more information on this operator the readers may refer to \cite{kussi}. Note that we will not impose the condition that $f$ is odd. The problem we will address is as follows.
\begin{align}
\begin{split}
-\mathscr{L}_\Phi u & = |u|^{{p_{s}^{\ast}}-2}u+\lambda f(x,u)\,\,\mbox{in}\,\,\Omega,\\
u & = 0\,\, \mbox{in}\,\, \mathbb{R}^N\setminus \Omega,
\end{split}
\end{align}
\noindent where, we have the following assumptions on the term $f(x,u)$.

\noindent $(f_{1})$ $f:\Omega\times\mathbb{R}\rightarrow \mathbb{R}$ is a measurable function with respect to the first variable and continuously differentiable with respect to second variable for almost every $x \in \Omega$. Moreover $f(x,0)=0$ for every $x \in \Omega$.

\noindent $(f_{2})$ There exist constants $c_{1} \in \left(0,\frac{1}{p_{s}^{\ast}-1}\right), c_{2}\in (p,p_{s}^{\ast}), 0 < c_{3} < c_{4}$ such that for any $u \in L^{q}(\Omega)$ and $p < q < p_{s}^{\ast},$

\noindent $c_{3}||u||_{L^{q}(\Omega)}^{q}\leq c_{2}\int_{\Omega}F(x,u)dx\leq\int_{\Omega}f(x,u)u dx\leq c_{1}\int_{\Omega}f_{u}(x,u)u^{2}dx\leq c_{4}||u||_{L^{q}(\Omega)}^{q}$.\\
\noindent The main result of this article is given in the form of the following theorem.

\begin{theorem}\label{Mainthm}
With the assumptions $(f_{1})$ and $(f_{2})$ there exist $\lambda^{\prime} > 0$ depending on $n, p, p^{\ast}_{s}$ and the constant $c_{3}$ in $(f_{2})$ such that for every $\Lambda \in \left[1,(\frac{c_{2}}{p})^{1/4}\right)$, where $c_{2}$ is also from $(f_{2})$ and $\lambda > \lambda^{\prime}$, there exist three distinct nontrivial weak solutions of the problem $(P)$. Moreover one of these solutions is positive, one is negative and one has a non-constant sign.
\end{theorem}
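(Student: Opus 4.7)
The plan is to realize the three solutions as minimizers of the energy functional
\[
I_\lambda(u) \;=\; \frac{1}{p}\,\rho(u) \;-\; \frac{1}{p_s^\ast}\int_\Omega |u|^{p_s^\ast}\,dx \;-\; \lambda\int_\Omega F(x,u)\,dx,
\]
where $\rho$ is the Gagliardo-type modular naturally associated with $-\mathscr{L}_\Phi$ on the fractional Sobolev space $X$ on which weak solutions are sought, restricted to three disjoint pieces of a Nehari-type constraint set corresponding to nonnegative, nonpositive, and sign-changing functions. This follows the scheme used in \cite{Struwe}, \cite{Napoli}, and \cite{Cantizano}; the task here is to carry it out for the general operator $-\mathscr{L}_\Phi$.

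First I would set up the Nehari manifold $\mathcal{N}_\lambda=\{u\in X\setminus\{0\}:\langle I'_\lambda(u),u\rangle=0\}$ and use $(f_2)$, specifically the quantitative bound $c_1<1/(p_s^\ast-1)$ together with the chain $\int f(x,u)u\,dx\le c_1\int f_u(x,u)u^2\,dx$, to verify that $\mathcal{N}_\lambda$ is a $C^1$-manifold of codimension one and that every nonzero $u\in X$ admits a unique $t(u)>0$ with $t(u)u\in\mathcal{N}_\lambda$. Then decompose $\mathcal{N}_\lambda=\mathcal{N}_\lambda^+\cup\mathcal{N}_\lambda^-\cup\mathcal{N}_\lambda^{\pm}$ into the nonnegative, nonpositive, and sign-changing parts, the last one defined by requiring both $t(u^+)u^+$ and $t(u^-)u^-$ to lie in $\mathcal{N}_\lambda$. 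Let $\alpha_\lambda^+,\alpha_\lambda^-,\alpha_\lambda^{\pm}$ denote the infima of $I_\lambda$ on these three pieces. Standard Ekeland-type arguments, adapted to preserve the sign-change on $\mathcal{N}_\lambda^{\pm}$ by treating the constraint as a product condition on $(u^+,u^-)$, then produce Palais--Smale sequences of $I_\lambda$ on $X$ at each of the three levels.

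The decisive step is compactness. The fractional concentration-compactness lemma of \cite{Bonder}, transferred to $-\mathscr{L}_\Phi$, provides a threshold $c^\ast=\tfrac{s}{N}\,S^{N/(sp)}$ below which $(PS)_c$ holds, where $S$ is the associated Sobolev best constant. The heart of the proof is to establish
\[
\alpha_\lambda^+<c^\ast,\qquad \alpha_\lambda^-<c^\ast,\qquad \alpha_\lambda^{\pm}<2c^\ast,
\]
for every $\lambda$ larger than a threshold $\lambda'=\lambda'(n,p,p_s^\ast,c_3)$. For the one-signed pieces I would test $I_\lambda$ with $\pm t\,U_\varepsilon$, where $U_\varepsilon$ is a suitable truncation of the extremal of $S$; for the sign-changing piece I would use a sum of two such extremals supported on disjoint balls of $\Omega$. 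The lower bound $c_3\|u\|_{L^q(\Omega)}^q\le c_2\int_\Omega F(x,u)\,dx$ in $(f_2)$ forces the $\lambda F$ term to dominate as $\lambda\to\infty$, pushing the energy below the threshold; the admissible range $\Lambda\in\big[1,(c_2/p)^{1/4}\big)$ emerges when balancing the Nehari projection $t(u)$ against the $2c^\ast$ ceiling in the sign-changing case.

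The hardest part will be precisely this sign-changing estimate $\alpha_\lambda^{\pm}<2c^\ast$, since one must simultaneously project both $u^+$ and $u^-$ onto $\mathcal{N}_\lambda$ while showing that the subcritical perturbation $\lambda F$ is strong enough to open the gap; technically this is a two-bubble test-function calculation that has to be performed in the low-regularity setting of $-\mathscr{L}_\Phi$ without relying on any explicit form of the extremizers. Once this is done, concentration-compactness supplies three minimizers, the sign of each limit is preserved by the continuity built into $(f_2)$, a strong maximum principle for $-\mathscr{L}_\Phi$ upgrades the one-signed minimizers to a strictly positive and a strictly negative solution, and the three solutions are automatically distinct because they live in three disjoint components of $X$.
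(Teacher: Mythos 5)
Your overall architecture matches the paper's: a Nehari-type constraint set split into nonnegative, nonpositive, and sign-changing pieces (the paper's $M_1$, $M_2$, $M_3=M_1\cap M_2$, with the sign-changing case handled exactly as you suggest, by imposing the constraint separately on $u_+$ and $u_-$, giving codimension $2$), Ekeland's variational principle on each piece, and a concentration-compactness argument giving a Palais--Smale threshold below which compactness is restored. Where you diverge is at the decisive energy estimate, and there your route is both harder than necessary and, for this operator, probably not executable. You propose to push the three infima below the threshold by testing with truncated extremals of the Sobolev constant, with a two-bubble computation for the sign-changing level and a target of $2c^\ast$ there. For the general kernel $K$ and nonlinearity $\Phi$ of this paper there is no explicit extremizer, and the asymptotic expansion of the energy of a truncated bubble --- the engine of every Brezis--Nirenberg-type computation --- is not available; you correctly identify this as the hardest step, but as stated it is a gap, not merely a difficulty. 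The paper avoids it entirely: since the theorem only claims the result for $\lambda>\lambda'$, one fixes \emph{any} $w_0>0$ (resp.\ $w_0<0$, resp.\ a pair with disjoint supports), projects it onto the constraint set via the parameter $t_\lambda$ of Lemma \ref{lem1}, and observes that $t_\lambda\to 0$ as $\lambda\to\infty$, whence
\[
0\le \inf_{K_1} I \le I(t_\lambda w_0)\le \frac{t_\lambda^p\Lambda^2}{p}[w_0]_{s,p}^p\longrightarrow 0,
\]
so all three levels drop below the (fixed, positive) compactness threshold $\left(\frac{1}{c_2}-\frac{1}{p_s^\ast}\right)\frac{S^{N/(ps)}}{\Lambda^2}$ for $\lambda$ large, with the same threshold serving all three pieces --- no factor of $2$ and no bubbles. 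Your own remark that ``the $\lambda F$ term dominates as $\lambda\to\infty$'' already contains this fix; you should discard the bubble analysis and make that remark the proof. Two smaller points: the restriction $\Lambda\in\bigl[1,(c_2/p)^{1/4}\bigr)$ enters not through the sign-changing estimate but through the boundedness of Palais--Smale sequences and the coercivity inequalities on $K_i$ (one needs $\frac{c_2\Lambda^{-4}}{p}-1>0$); and no strong maximum principle is needed or invoked --- the claimed sign properties follow directly from membership in $K_1$, $K_2$, $K_3$ together with the uniform lower bound $[u_\pm]_{s,p}^p\ge D>0$ of Lemma \ref{lem3}, which is fortunate since a strong maximum principle for the general $-\mathscr{L}_\Phi$ is itself a nontrivial assertion.
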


\section{Functional analytic setup and Main tools}
We now discuss a few definitions and  the functional analytic setting which will be used in this paper. $-\mathscr{L}_\Phi$ is a nonlocal operator 
which is defined as
\begin{equation}
\langle-\mathscr{L}_\Phi u, \varphi\rangle
=\int_{\mathbb{R}^N}\int_{\mathbb{R}^N}
\Phi(u(x)-u(y))(\varphi(x)-\varphi(y))K(x,y)dxdy,\label{e3}
\end{equation}
for every smooth function $\varphi$ with compact support, i.e.,
$\varphi \in C_c^\infty(\mathbb{R}^N)$. The function $\Phi$ is a
real valued continuous function over $\mathbb{R}$, satisfying
$\Phi(0)=0$ together with the following monotonicity property
\begin{equation}
\Lambda ^{-1} |t|^p \leq \Phi(t)t\leq \Lambda |t|^p, \,\,\forall
\,t\in \mathbb{R}.\label{e4}
\end{equation}
We assume that $\Phi$ is convex to get the following condition.
\begin{equation}
a^{-1}|t|^{p}\leq \Phi^{\prime}(t)t^{2}\leq a|t|^{p}
\end{equation}
for some $a > 0$. The kernel $K : \mathbb{R}^N \times \mathbb{R}^N \rightarrow
\mathbb{R}$ is a measurable function satisfying the following
ellipticity property
\begin{equation}
\frac{1}{\Lambda |x-y|^{N+sp}} \leq K(x,y) \leq
\frac{\Lambda}{|x-y|^{N+sp}}, \,\forall x, y \in \mathbb{R}^N, x\neq
y,\label{e5}
\end{equation}
where $\Lambda \geq 1$, $s\in (0,1),\; p > 2- \frac{s}{N}$ with $
q\in(p,p_s^{*})$, where $p_s^{*}= \frac{Np}{N-sp}$ and $p'= \frac{p}{p-1}$. Assumptions made in $(\ref{e4})$ and $(\ref{e5})$
makes the nonlocal operator $-\mathscr{L}_\Phi$ to be an elliptic
operator. Note that, upon taking the special case $\Phi(t)=
|t|^{p-2}t$ with $K(x,y)= |x-y|^{-(N+sp)}$ in $(\ref{e3})$, we
recover the fractional
$p$-Laplacian, for every $\varphi \in C_{c}^{\infty}(\mathbb{R}^N)$ as 
$$\langle (-\Delta)_p^su, \varphi\rangle=\displaystyle{\int_{\mathbb{R}^N}\int_{\mathbb{R}^N}\frac{|u(x)-u(y)|^{p-2}(u(x)-u(y))(\varphi(x)-\varphi(y))}{|x-y|^{N+ps}}dxdy}$$ %On the other hand,
%by taking $\phi(t)= t$ with the same kernel $|x-y|^{-(N+sp)}$, one
%can also recover the case of the classical fractional Laplacian,
%$\mathscr{L}_\phi=(-\Delta)^s$.
In other words, the nonlocal operator $-\mathscr{L}_\Phi$, is a
generalization of the fractional $p$-Laplacian for $1 \leq p < \infty$ and $s\in (0,1)$. Due to the nonlocal character of
$\mathscr{L}_\Phi$ defined in $\ref{e3}$, it is natural to work with
Sobolev space $W^{s, p}(\mathbb{R}^N)$ and express the Dirichlet
condition in $\mathbb{R}^N\setminus \Omega$ rather than
$\partial\Omega$. Although fractional Sobolev space are well known
since the beginning of the last century, especially in the field of
harmonic analysis, they have become increasingly popular in the last
few years due to the immense work of Caffarelli \& Silvestre
\cite{Caffarelli} and the references therein. We now turn to our problem for which we provide the variational setting on a suitable function space for $(1.1)$, in conjunction with some
preliminary results. For all measurable functions $u:
\mathbb{R}^N\rightarrow \mathbb{R}$, we set
$$\|u\|_{L^p(\mathbb{R}^N)}=\left(\int_{\mathbb{R}^N}|u(x)|^p dx\right)^{\frac{1}{p}},$$
$$[u]_{s,p}= \left(\int_{\mathbb{R}^{2N}}\frac{|u(x)-u(y)|^p}{|x-y|^{N+sp}}
dxdy\right)^{\frac{1}{p}},$$ where $p\in(1, \infty)$ and $s\in
(0,1)$. The fractional Sobolev space $W^{s,p}(\mathbb{R}^N)$ is
defined as the space of all function $u\in L^p(\mathbb{R}^N)$ such
that $[u]_{s,p}$ is finite and endowed with the norm
$$\|u\|_{W^{s,p}(\mathbb{R}^N)}=
\left(\|u\|_{L^p(\mathbb{R}^N)}^p+[u]_{s,p}^p\right)^{\frac{1}{p}}.$$
Further information on fractional Sobolev space can be found in Nezza et al. \cite{nezza} and the references therein. We now define $W_{0}^{s,p}(\Omega)$ which is a closed linear
subspace of $W^{s,p}(\mathbb{R}^N)$ as follows. \\
$$W_0^{s,p}(\Omega)= \{u\in W^{s,p}(\mathbb{R}^N): u=0\,\,
a.e.\,\,\text{in}\,\,\mathbb{R}^N\setminus\Omega\}.$$ It is easy
to see that the norms $\|\cdot\|_{L^p(\mathbb{R}^N)}$ and $\|\cdot
\|_{L^p(\Omega)}$ agree on $W_0^{s,p}(\Omega)$. We also have a
Poincar\'{e} type inequality which is as follows:
$$\|u\|_{L^p(\Omega)}\leq C
[u]_{s,p},\,\,\,\mbox{for\,\,all}\,\,u\in W_0^{s,p}(\Omega).$$ Thus,
we can equivalently renorm $W_0^{s,p}(\Omega)$, by setting
$\|u\|_{W_0^{s,p}(\Omega)}= [u]_{s,p}$, for every $u\in
W_0^{s,p}(\Omega)$. Let $p_s^*= \frac{Np}{N-sp}$ , with the
agreement that $p_s^*=\infty$ if $N\geq sp$. It is well known that
$(W_0^{s,p}(\Omega), \|\cdot\|_{W_0^{s,p}(\Omega)})$ is a uniformly
convex reflexive Banach space. It is continuously embedded into
$L^r(\Omega)$, for all $r\in[1, p_s^*]$ if $sp<N$, for all $1\leq r
<\infty$ if $N=sp$ and into $L^\infty(\Omega)$ if $N<sp$. It is also
compactly embedded in $L^r(\Omega)$ for any $r\in[1, p_s^*)$ if
$N\geq sp$ and in $L^\infty(\Omega)$ for $N<sp$. Furthermore,
$C_c^\infty(\Omega)$ is a dense subspace of $W_0^{s,p}(\Omega)$ with
respect to $\|\cdot\|_{W^{s,p}(\Omega)}$.\\
We define an associated energy functional to the problem $(1.1)$ as
$$I(u) =
\int_{\mathbb{R}^N}\int_{\mathbb{R}^N}
\mathcal{P}\Phi(u(x)-u(y))K(x,y)dx dy -\frac{1}{p_{s}^{\ast}}\int_\Omega |u|^{p_{s}^{\ast}}-\lambda \int_{\Omega} F(x,u(x))dx,$$ where $\mathcal{P}\Phi(t) :=
\int_0^{|t|}\Phi(\tau)d\tau$ being the primitive of $\Phi$. Thus by
$(\ref{e4})$ we have
\begin{equation} \Lambda^{-1}\frac{|t|^p}{p}\leq
\mathcal{P}\Phi(t)\leq \Lambda \frac{|t|^p}{p},\label{e6}
\end{equation}
for $t\neq 0$ and $\mathcal{P}\Phi(0) =0$. The Fr\'{e}chet
derivative of $I_{P_1}$, which is in $W_0^{-s, p'}(\Omega)$, the dual
space of $W_0^{s, p}(\Omega)$ where $p'= \frac{p}{p-1}$ is
defined as
\begin{align}
\begin{split}
\langle I'(u), v\rangle = \int_{\mathbb{R}^N}\int_{\mathbb{R}^N}
\Phi(u(x)-u(y))(v(x)-v(y))K(x,y)dxdy -\int_\Omega |u|^{{p^{\ast}_{s}}-2}uvdx\\
\MoveEqLeft\hspace{-11.5cm}-\lambda \int_{\Omega}f(x,u)v dx \;\;\forall\; v\in W_0^{s, p}(\Omega).\label{e8'}
\end{split}
\end{align}
We will now give a few definitions which will be used in this article.
\begin{definition}\label{defn1}
For i=1,2,3, let us define the manifold $M_{i} \subset W_{0}^{s,p}(\Omega)$ as
\begin{align*}
\begin{split}
M_{1}=&\lbrace u \in W_{0}^{s,p}(\Omega):\int_{\Omega}u_{+} > 0 \;\mathrm{and}\;\int_{\mathbb{R}^N}\int_{\mathbb{R}^N}
\Phi(u_{+}(x)-u_{+}(y))(u_{+}(x)-u_{+}(y))\\
&K(x,y)dxdy-\int_{\Omega} |u_{+}|^{p^{\ast}_{s}}dx=\lambda \int_{\Omega}f(x,u)u_{+} dx\rbrace,\\
M_{2}=&\lbrace u \in W_{0}^{s,p}(\Omega):\int_{\Omega}u_{-} > 0 \;\mathrm{and}\;\int_{\mathbb{R}^N}\int_{\mathbb{R}^N}
\Phi(u_{-}(x)-u_{-}(y))(u_{-}(x)-u_{-}(y))\\
&K(x,y)dxdy-\int_{\Omega} |u_{-}|^{p^{\ast}_{s}}dx=\lambda \int_{\Omega}f(x,u)u_{-} dx\rbrace,\\
M_{3}=&M_{1}\cap M_{2},
\end{split}
\end{align*}
where $u_{+}=\text{max}\lbrace u,0\rbrace$ and $u_{-}=\text{max}\lbrace -u,0\rbrace$.
\end{definition}

\begin{definition}\label{defn2}
For i=1,2,3, let us define $K_{i} \subset W_{0}^{s,p}(\Omega)$ as
$K_{1}=\lbrace u\in M_{1}: u \geq 0\rbrace, K_{2}=\lbrace u \in M_{2}: u \leq 0\rbrace, K_{3}=M_{3}$.
\end{definition}

\begin{definition}\label{defn3}
(Ekeland Variational Principle) Let $\Phi$ be a lower semicontinuous bounded below function from a Banach space X into $\mathbb{R} \cup \left\lbrace+\infty\right\rbrace$. For every $\epsilon > 0$, there is $x_{0} \in X$ such that $\Phi(x) \geq \Phi(x_{0})-\epsilon{\|x-x_{0}\|}$ for every $x \in X$.
\end{definition}

 \section{Existence Results}
In order to prove the main result of this article, given in the form of Theorem (1.1), we will first prove a few lemmas.  

\begin{lemma}\label{lem1}
For every $w_{0} \in W_{0}^{s,p}(\Omega), w_{0} > 0 (w_{0} < 0),$ there exists $t_{\lambda} > 0$ such that $t_{\lambda}w_{0} \in M_{1} (\in M_{2})$. Moreover $\underset{\lambda\rightarrow\infty}{lim}t_{\lambda}=0$.\\
       consequently, given $w_{0},w_{1} \in W_{0}^{s,p}(\Omega), w_{0} > 0, w_{1} < 0$ with disjoint supports, there exist $\overline{t_{\lambda}},\underline{t_{\lambda}}$ such that $\overline{t_{\lambda}}w_{0}+\underline{t_{\lambda}}w_{1} \in M_{3}$ and 
$\overline{t_{\lambda}},\underline{t_{\lambda}}\rightarrow 0$ as $\lambda\rightarrow \infty$.
\end{lemma}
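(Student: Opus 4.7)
The plan is to reduce $t_\lambda w_0 \in M_1$ to a one-dimensional zero-finding problem and apply the intermediate value theorem. For $w_0>0$ and $t>0$, set $u:=tw_0$ so that $u_+=tw_0$, and note that $u \in M_1$ is equivalent to $g(t)=0$, where
\[
g(t) := \iint_{\mathbb{R}^{2N}} \Phi\bigl(t(w_0(x)-w_0(y))\bigr)\, t(w_0(x)-w_0(y))\, K(x,y)\, dx\, dy - t^{p_s^*}\!\int_\Omega w_0^{p_s^*}\, dx - \lambda \int_\Omega f(x, tw_0)\, tw_0\, dx.
\]
By the continuity of $\Phi$ and $f$ in the real variable together with a standard dominated-convergence argument, $g$ is continuous on $[0,\infty)$ with $g(0)=0$.

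Next I would use the two-sided bounds $(\ref{e4})$--$(\ref{e5})$ to sandwich the kinetic term between $\Lambda^{-2}t^p[w_0]_{s,p}^p$ and $\Lambda^2 t^p[w_0]_{s,p}^p$, and $(f_2)$ to sandwich $\int_\Omega f(x,tw_0)\,tw_0\,dx$ between $c_3 t^q\|w_0\|_{L^q(\Omega)}^q$ and $c_4 t^q\|w_0\|_{L^q(\Omega)}^q$. Exploiting $p<q<p_s^*$ and factoring out $t^p$, the lower estimate
\[
g(t) \geq t^p\Bigl(\Lambda^{-2}[w_0]_{s,p}^p - t^{p_s^*-p}\int_\Omega w_0^{p_s^*}\,dx - \lambda c_4 t^{q-p}\|w_0\|_{L^q(\Omega)}^q\Bigr)
\]
is strictly positive for all sufficiently small $t$, while the analogous upper estimate shows $g(t)\to -\infty$ as $t\to\infty$ since $-t^{p_s^*}$ dominates. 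The intermediate value theorem then produces $t_\lambda>0$ with $g(t_\lambda)=0$, i.e., $t_\lambda w_0\in M_1$. Substituting back into $g(t_\lambda)=0$, discarding the non-negative critical term, and pairing the upper bound on the kinetic term with the lower bound in $(f_2)$ yields $\Lambda^2 t_\lambda^p[w_0]_{s,p}^p \geq \lambda c_3 t_\lambda^q\|w_0\|_{L^q(\Omega)}^q$, hence $t_\lambda^{q-p} \leq C(w_0)/\lambda$; because $q>p$, this forces $t_\lambda\to 0$ as $\lambda\to\infty$. The case $w_0<0$ giving $t_\lambda w_0\in M_2$ is treated symmetrically, replacing $w_0$ by $-w_0$ and using that $\Phi(-s)(-s)\in[\Lambda^{-1}|s|^p,\Lambda|s|^p]$ as well.

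For the consequence, I take $\overline{t_\lambda}$ and $\underline{t_\lambda}$ from the first part applied respectively to $w_0>0$ and $w_1<0$, and set $v:=\overline{t_\lambda}w_0+\underline{t_\lambda}w_1$. The disjointness of the supports of $w_0$ and $w_1$ yields $v_+=\overline{t_\lambda}w_0$ and $v_-=-\underline{t_\lambda}w_1$, and the pointwise identities $f(x,v)\,v_+ = f(x,\overline{t_\lambda}w_0)\,\overline{t_\lambda}w_0$ and $f(x,v)\,v_- = f(x,\underline{t_\lambda}w_1)\,(-\underline{t_\lambda}w_1)$ hold almost everywhere, using $f(x,0)=0$ from $(f_1)$. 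Consequently the $M_1$ and $M_2$ identities applied to $v$ collapse to the scalar identities already solved for $\overline{t_\lambda}w_0$ and $\underline{t_\lambda}w_1$, placing $v\in M_1\cap M_2 = M_3$; the decay $\overline{t_\lambda},\underline{t_\lambda}\to 0$ is inherited from the first part. The main subtlety throughout is the non-homogeneity of $\Phi$: one cannot factor powers of $t$ out of the kinetic term in closed form and must argue instead with the two-sided bound $\Lambda^{-1}|s|^p\leq \Phi(s)s\leq \Lambda|s|^p$, which nevertheless captures enough $t$-scaling to make the IVT step and the decay estimate go through.
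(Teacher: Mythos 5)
Your proposal is correct and follows essentially the same route as the paper: reduce membership in $M_1$ to the vanishing of the scalar function $t\mapsto\varphi_1(tw_0)$, sandwich it between $A_1t^p-Bt^{p_s^*}-E_1t^q$ and $A_2t^p-Bt^{p_s^*}-E_2t^q$ using (2.2), (2.4) and $(f_2)$, apply the intermediate value theorem, and deduce $t_\lambda^{q-p}\leq C(w_0)/\lambda$ so that $t_\lambda\to 0$. Your derivation of the decay bound (substituting back into the identity and dropping the critical term) is a minor, equally valid variant of the paper's choice of the explicit root $t_1=(A_2/E_2)^{1/(q-p)}$, and your disjoint-support argument for $M_3$ just spells out what the paper dismisses as analogous.
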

\begin{proof}
It is enough to prove the lemma for $M_{1}$ since the other cases will follow analogously.\\
For $w\in W_{0}^{s,p}(\Omega)$ with $w \geq 0$, consider the functional \\
$$\varphi_{1}(w)=\int_{\mathbb{R}^N}\int_{\mathbb{R}^N}
\Phi(w(x)-w(y))(w(x)-w(y))K(x,y)dxdy -\int_\Omega |w|^{p^{\ast}_{s}}dx-\lambda \int_{\Omega}f(x,w)w dx.$$\\
In order to prove the lemma we will show that for each $w_{0}\in W_{0}^{s,p}(\Omega)$ there exists a `$t_{\lambda}$' such that $\varphi_{1}(t_{\lambda}w_{0})=0$.
\begin{align}
\begin{split}
\varphi_{1}(tw_{0})&=\int_{\mathbb{R}^N}\int_{\mathbb{R}^N}
\Phi(tw_{0}(x)-tw_{0}(y))(tw_{0}(x)-tw_{0}(y))K(x,y)dxdy
 -\int_\Omega |tw_{0}|^{p^{\ast}_{s}}dx\\
 &-\lambda \int_{\Omega}f(x,tw_{0})tw_{0} dx
\end{split}
\end{align}
\begin{align*}
\begin{split}
&\geq \Lambda^{-2}\int_{\mathbb{R}^N}\int_{\mathbb{R}^N}\frac{|tw_{0}(x)-tw_{0}(y)|^{p}}{|x-y|^{N+ps}}dx dy-t^{p_{s}^{\ast}}\int_\Omega |w_{0}|^{p^{\ast}_{s}}dx-\lambda \int_{\Omega}f(x,tw_{0})tw_{0} dx\\
&=\Lambda^{-2}t^{p}[w_{0}]_{s,p}^{p}-t^{p_{s}^{\ast}}\int_\Omega |w_{0}|^{p^{\ast}_{s}}dx-\lambda \int_{\Omega}f(x,tw_{0})tw_{0} dx
\end{split}
\end{align*}
Further on using $(f_{2})$, we get
\begin{align}
\begin{split}
\varphi_{1}(tw_{0})&\geq \Lambda^{-2}t^{p}[w_{0}]_{s,p}^{p}-t^{p_{s}^{\ast}}\int_\Omega |w_{0}|^{p^{\ast}_{s}}dx-\lambda c_{4} \int_{\Omega}|tw_{0}|^{q} dx\\
&\geq A_{1}t^{p}-Bt^{p_{s}^{\ast}}-E_{1}t^{q}
\end{split}
\end{align}
where, $A_{1}=\Lambda^{-2}[w_{0}]_{s,p}^{p}, B=\int_{\Omega}|w_{0}|^{p_{s}^{\ast}}dx, E_{1}=\lambda c_{4}\int_{\Omega}|w_{0}|^{q}dx$
and,
\begin{align}
\begin{split}
\varphi_{1}(tw_{0})&\leq \Lambda^{2}t^{p}[w_{0}]_{s,p}^{p}-t^{p_{s}^{\ast}}\int_\Omega |w_{0}|^{p^{\ast}_{s}}dx-\lambda c_{3} \int_{\Omega}|tw_{0}|^{q} dx\\
&\leq A_{2}t^{p}-Bt^{p_{s}^{\ast}}-E_{2}t^{q}
\end{split}
\end{align}
where, $A_{2}=\Lambda^{2}[w_{0}]_{s,p}^{p}, B=\int_{\Omega}|w_{0}|^{p_{s}^{\ast}}dx, E_{2}=\lambda c_{3}\int_{\Omega}|w_{0}|^{q}dx$\\
Since, $p < q < p_{s}^{\ast}$, it follows from (3.1) and (3.2) that $\varphi_{1}(tw_{0}) > 0$ for a $`t'$ small enough and $\varphi_{1}(tw_{0}) < 0$ for some $`t'$ big enough. Hence there exists some $`t_{\lambda}'$ such that $\varphi_{1}(t_{\lambda}w_{0})=0$. We will now establish the existence of an upper bound of $t_{\lambda}$. For this it is enough to show the existence of some $t_{1}$ such that $\varphi_{1}(t_{1}w_{0}) < 0$. From (3.2), it is easy to see that
$\varphi_{1}(t_{1}w_{0}) <  A_{2}t^{p}-E_{2}t^{q}$. On choosing $t_{1}$ such that $A_{2}t_{1}^{p}-E_{2}t_{1}^{q}=0$ implying $t_{1}=\left(\frac{A_{2}}{E_{2}}\right)^{\frac{1}{q-p}}$. Therefore, there exists $t_{\lambda}\in [0,t_{1}]$. The definition of $\varphi_{1}$ in (3.1) implies that $t_{\lambda}\rightarrow 0$ as $\lambda \rightarrow \infty$.
\end{proof}

\begin{lemma}\label{lem2}
There exist constants $\alpha_{j} > 0$ such that for every $u\in K_{i},$\; i=1,2,3\; $\alpha_{1}[u]_{s,p}^{p}\leq \alpha_{2}\left(\int_{\Omega}|u|^{p_{s}^{\ast}}dx+\int_{\Omega}\lambda f(x,u)udx\right)\leq \alpha_{3}I(u)\leq \alpha_{4}[u]_{s,p}^{p}$.
\end{lemma}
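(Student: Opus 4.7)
The strategy is to use the manifold-defining equation to trade the Gagliardo seminorm $[u]_{s,p}^{p}$ against the quantity $\int_\Omega |u|^{p_s^{*}}dx + \lambda\int_\Omega f(x,u)u\,dx$, and then couple this with the two-sided comparison (\ref{e6}) between $\mathcal{P}\Phi$ and $\Phi(t)t$ to control the energy $I(u)$. I would work out the chain in detail on $K_1$, where $u=u_+\ge 0$ makes the $M_1$-defining equation read
$$\int_{\mathbb{R}^N}\!\!\int_{\mathbb{R}^N}\Phi(u(x)-u(y))(u(x)-u(y))K(x,y)\,dxdy \;-\; \int_\Omega |u|^{p_s^{*}}\,dx \;=\; \lambda\int_\Omega f(x,u)u\,dx.$$
The $K_2$ case is the mirror image (using $[u_-]_{s,p}=[u]_{s,p}$ when $u\le 0$), while $K_3$ is obtained by summing the $M_1$ and $M_2$ equations and invoking the elementary bounds $a^{p}+b^{p}\le(a+b)^{p}\le 2^{p-1}(a^{p}+b^{p})$ for $a,b\ge 0$, $p\ge 1$, in order to compare $[u_+]_{s,p}^{p}+[u_-]_{s,p}^{p}$ with $[u]_{s,p}^{p}$.

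For the first inequality I combine the ellipticity bounds (\ref{e4}) and (\ref{e5}) to get
$$\Lambda^{-2}[u]_{s,p}^{p}\;\le\;\int_{\mathbb{R}^N}\!\!\int_{\mathbb{R}^N}\Phi(u(x)-u(y))(u(x)-u(y))K\,dxdy\;\le\;\Lambda^{2}[u]_{s,p}^{p},$$
and substitute into the manifold equation. This delivers $\alpha_{1}[u]_{s,p}^{p}\le\alpha_{2}\bigl(\int|u|^{p_s^{*}}+\lambda\int f(x,u)u\bigr)$ with $\alpha_{1}=\Lambda^{-2}$, $\alpha_{2}=1$, and simultaneously the reverse estimate $[u]_{s,p}^{p}\ge\Lambda^{-2}\bigl(\int|u|^{p_s^{*}}+\lambda\int f(x,u)u\bigr)$ that is needed in the next step.

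For the middle inequality I apply (\ref{e6}) together with (\ref{e5}) to obtain $\int\!\!\int \mathcal{P}\Phi(u(x)-u(y))K\,dxdy\ge\frac{\Lambda^{-2}}{p}[u]_{s,p}^{p}$, and I use the second inequality in $(f_{2})$ in the form $\int F(x,u)\le\frac{1}{c_{2}}\int f(x,u)u$. Chaining these with the reverse estimate from the previous paragraph yields
$$I(u)\;\ge\;\left(\frac{\Lambda^{-4}}{p}-\frac{1}{p_s^{*}}\right)\int_\Omega|u|^{p_s^{*}}\,dx + \lambda\left(\frac{\Lambda^{-4}}{p}-\frac{1}{c_{2}}\right)\int_\Omega f(x,u)u\,dx,$$
and both coefficients are strictly positive precisely under the standing hypothesis $\Lambda^{4}<c_{2}/p<p_s^{*}/p$, furnishing a constant $\alpha_{3}>0$ with $\alpha_{2}\bigl(\int|u|^{p_s^{*}}+\lambda\int f(x,u)u\bigr)\le\alpha_{3}I(u)$.

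Finally, the last inequality follows by discarding the nonpositive terms $-\frac{1}{p_s^{*}}\int|u|^{p_s^{*}}\,dx$ and $-\lambda\int F(x,u)\,dx$ (the latter is $\le 0$ since $(f_{2})$ forces $\int F\ge\frac{c_{3}}{c_{2}}\|u\|_{L^{q}(\Omega)}^{q}\ge 0$) and then using the upper halves of (\ref{e6}) and (\ref{e5}) to majorise $\int\!\!\int\mathcal{P}\Phi\,K\le\frac{\Lambda^{2}}{p}[u]_{s,p}^{p}$, giving $I(u)\le\frac{\Lambda^{2}}{p}[u]_{s,p}^{p}$ and so $\alpha_{4}:=\alpha_{3}\Lambda^{2}/p$ works. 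The one delicate step is the middle inequality: the quartic power $\Lambda^{4}$ arises once from the manifold exchange and once from (\ref{e6}), and the whole chain collapses unless $\tfrac{\Lambda^{-4}}{p}-\tfrac{1}{c_{2}}>0$. This is precisely the origin of the constraint $\Lambda<(c_{2}/p)^{1/4}$ in Theorem \ref{Mainthm}; everything else is bookkeeping with the ellipticity constants and a sign check via $(f_{2})$.
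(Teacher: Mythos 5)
Your proposal is correct and follows essentially the same route as the paper's proof: the manifold-defining identity together with the ellipticity bounds (2.2) and (2.4) is used to trade $[u]_{s,p}^{p}$ against $\int_{\Omega}|u|^{p_{s}^{\ast}}dx+\lambda\int_{\Omega}f(x,u)u\,dx$, the bound (2.5) and the inequality $\int_{\Omega}F(x,u)dx\leq\frac{1}{c_{2}}\int_{\Omega}f(x,u)u\,dx$ from $(f_{2})$ sandwich $I(u)$, and the $\Lambda^{-4}$ you isolate in the middle inequality is exactly the paper's source of the constraint $\Lambda<\left(\frac{c_{2}}{p}\right)^{1/4}$. The only (harmless) divergence is in the final inequality, where you discard $-\lambda\int_{\Omega}F(x,u)dx\leq 0$ directly via $(f_{2})$, whereas the paper bounds this term through the manifold equation; both give $I(u)\leq C[u]_{s,p}^{p}$, and your explicit attention to the $K_{2}$ and $K_{3}$ cases (comparing $[u_{\pm}]_{s,p}^{p}$ with $[u]_{s,p}^{p}$) is if anything more careful than the paper's.
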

\begin{proof}
As $u\in K_{i}$ so 
\begin{align}
\begin{split}
\int_{\mathbb{R}^N}\int_{\mathbb{R}^N}
\Phi(u(x)-u(y))(u(x)-u(y))K(x,y)dxdy =\int_\Omega |u|^{p^{\ast}_{s}}dx+\lambda \int_{\Omega}f(x,u)u dx
\end{split}
\end{align}
Using (2.2), we get $\Lambda^{-2}[u]_{s,p}^{p}\leq \int_\Omega |u|^{p^{\ast}_{s}}dx+\lambda \int_{\Omega}f(x,u)u dx$. On choosing $\alpha_{2}=\Lambda^{2}\alpha_{1}$, first inequality holds.\\
We now prove the last inequality i.e., $\alpha_{3}I(u)\leq \alpha_{4}[u]_{s,p}^{p}$. Using $(f_{2})$, (2.2) and (3.4) we get,
\begin{align*}
\begin{split}
\left|\lambda\int_{\Omega}F(x,u)dx\right|&=\lambda\int_{\Omega}F(x,u)dx\leq \lambda\frac{1}{c_{2}}\int_{\Omega}f(x,u)u dx\\
&=\frac{1}{c_{2}}\left(\Phi(u(x)-u(y))(u(x)-u(y))K(x,y)dxdy -\int_\Omega |u|^{p^{\ast}_{s}}dx\right)\\
&\leq\frac{\Lambda^{2}}{c_{2}}[u]^{p}_{s,p}-\frac{1}{c_{2}}\int_{\Omega}|u|^{p_{s}^{\ast}}
\end{split}
\end{align*}
From here we obtain that,
\begin{align} 
-\lambda\int_{\Omega}F(x,u)dx\leq \frac{1}{c_{2}}\left[\Lambda^{2}[u]^{p}_{s,p}-\int_{\Omega}|u|^{p^{\ast}_{s}}\right]
\end{align}
 So, using (3.5) and (2.5)
\begin{align*}
\begin{split}
I(u)&=\int_{\mathbb{R}^N}\int_{\mathbb{R}^N}
\mathcal{P}\Phi(u(x)-u(y))K(x,y)dx dy -\frac{1}{p_{s}^{\ast}}\int_\Omega |u|^{p_{s}^{\ast}}-\lambda \int_{\Omega} F(x,u)dx\\
&\leq \frac{\Lambda^{2}}{p}[u]_{s,p}^{p}-\frac{1}{p^{\ast}_{s}}\int\Omega |u|^{p_{s}^{\ast}}+\frac{1}{c_{2}}\left[\Lambda^{2}[u]_{s,p}^{p}-\int_\Omega |u|^{p_{s}^{\ast}}\right]\\
&\leq \left(\frac{\Lambda^{2}}{p}+\frac{\Lambda^{2}}{c_{2}}\right)[u]_{s,p}^{p}=\Lambda^{2}\left(\frac{1}{p}+\frac{1}{c_{2}}\right)[u]_{s,p}^{p}
\end{split}
\end{align*}
On choosing $\alpha_{4}=\Lambda^{2}\left(\frac{1}{p}+\frac{1}{c_{2}}\right)\alpha_{3}$, we obtain the inequality $\alpha_{3}I(u)\leq \alpha_{4}[u]_{s,p}^{p}$. What we are left to prove is the middle inequality i.e. $\alpha_{2}\left(\int_{\Omega}|u|^{p_{s}^{\ast}}dx+\int_{\Omega}\lambda f(x,u)udx\right)\leq \alpha_{3}I(u)$. For this, we again use $(f_{2})$ and (2.5) to get
\begin{align*}
\begin{split}
I(u)&=\int_{\mathbb{R}^N}\int_{\mathbb{R}^N}
\mathcal{P}\Phi(u(x)-u(y))K(x,y)dx dy -\frac{1}{p_{s}^{\ast}}\int_\Omega |u|^{p_{s}^{\ast}}-\lambda \int_{\Omega} F(x,u)dx\\
&\geq \frac{\Lambda^{-2}}{p}[u]_{s,p}^{p}-\frac{1}{p^{\ast}_{s}}\int_{\Omega} |u|^{p_{s}^{\ast}}-\frac{1}{c_{2}}\lambda\int_{\Omega}f(x,u)u dx
\end{split}
\end{align*}
Multiplying both sides by $c_{2}$ and using (2.2), (2.4),
\begin{align*}
\begin{split}
c_{2}I(u)&\geq \frac{c_{2}\Lambda^{-2}}{p}[u]_{s,p}^{p}-\frac{c_{2}}{p^{\ast}_{s}}\int_{\Omega} |u|^{p_{s}^{\ast}}-\lambda\int_{\Omega}f(x,u)u dx\\
&\geq \frac{c_{2}\Lambda^{-4}}{p}\int_{\mathbb{R}^N}\int_{\mathbb{R}^N}
\Phi(u(x)-u(y))(u(x)-u(y))K(x,y)dxdy-\frac{c_{2}}{p^{\ast}_{s}}\int_{\Omega} |u|^{p_{s}^{\ast}}dx\\
&-\lambda\int_{\Omega}f(x,u)u dx\\
&= \frac{c_{2}\Lambda^{-4}}{p}\left(\int_{\Omega}|u|^{p_{s}^{\ast}}dx+\lambda\int_{\Omega}f(x,u)u dx\right)-\frac{c_{2}}{p^{\ast}_{s}}\int_{\Omega} |u|^{p_{s}^{\ast}}-\lambda\int_{\Omega}f(x,u)u dx\\
&= c_{2}\left(\frac{\Lambda^{-4}}{p}-\frac{1}{p_{s}^{\ast}}\right) \int_{\Omega}|u|^{p_{s}^{\ast}}dx+\lambda\left(\frac{c_{2}\Lambda^{-4}}{p}-1\right)\int_{\Omega}f(x,u)u dx
\end{split}
\end{align*}
Since, from assumption $(f_{2})$ we know $c_{2} < p^{\ast}_{s}$, hence
\begin{align*}
c_{2}I(u) \geq\left(\frac{c_{2}\Lambda^{-4}}{p}-1\right)\left(\int_{\Omega} |u|^{p_{s}^{\ast}}dx+\lambda\int_{\Omega}f(x,u)u dx\right) 
\end{align*}
Now on choosing $\alpha_{3}=c_{2}$ and $\alpha_{2}=\left(\frac{c_{2}\Lambda^{-4}}{p}-1\right);$ where $\Lambda\in \left[1,\left( \frac{c_{2}}{p})^{\frac{1}{4}}\right) \right]$, the inequality follows.
\end{proof}

\begin{lemma}\label{lem3}
There exists a constant $D > 0$ such that $[u_{+}]_{s,p}^{p} \geq D$ in $K_{1}$, $[u_{-}]_{s,p}^{p} \geq D$ in $K_{2}$ and $[u_{+}]_{s,p}^{p}, [u_{-}]_{s,p}^{p} \geq D$ in $K_{3}$.
\end{lemma}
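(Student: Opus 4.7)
The idea is to exploit the constraint defining $M_i$ together with the ellipticity (2.2), the kernel bound (2.4), and the fractional Sobolev embeddings $W_0^{s,p}(\Omega)\hookrightarrow L^{p_s^*}(\Omega)$ and $W_0^{s,p}(\Omega)\hookrightarrow L^q(\Omega)$, to show that the seminorm $[u_+]_{s,p}$ (respectively $[u_-]_{s,p}$) cannot be arbitrarily small on $K_1$ (respectively $K_2$), and both on $K_3$.

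First I would take $u\in K_1$, so $u\geq 0$ and hence $u=u_+$. The membership in $M_1$ reads
$$\int_{\mathbb{R}^N}\int_{\mathbb{R}^N}\Phi(u_+(x)-u_+(y))(u_+(x)-u_+(y))K(x,y)\,dx\,dy-\int_\Omega|u_+|^{p_s^*}\,dx=\lambda\int_\Omega f(x,u)u_+\,dx.$$
Lower-bounding the left-hand integrand by $\Lambda^{-1}|u_+(x)-u_+(y)|^p\cdot\Lambda^{-1}|x-y|^{-(N+sp)}$ via (2.2) and (2.4) gives
$$\Lambda^{-2}[u_+]_{s,p}^{p}\le\int_\Omega|u_+|^{p_s^*}\,dx+\lambda\int_\Omega f(x,u)u_+\,dx.$$
Because $u_+=0$ on $\{u<0\}$ and $f(x,0)=0$, one has the identity $\int_\Omega f(x,u)u_+\,dx=\int_\Omega f(x,u_+)u_+\,dx$, so $(f_2)$ applied to $u_+\in L^q(\Omega)$ yields $\int_\Omega f(x,u)u_+\,dx\le c_4\|u_+\|_{L^q(\Omega)}^q$.

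Next, from the continuous embeddings $W_0^{s,p}(\Omega)\hookrightarrow L^{p_s^*}(\Omega), L^q(\Omega)$ (applied to $u_+\in W_0^{s,p}(\Omega)$), there exist constants $C_1,C_2>0$ depending only on $\Omega$, $N$, $s$, $p$, $q$ such that
$$\Lambda^{-2}[u_+]_{s,p}^{p}\le C_1[u_+]_{s,p}^{p_s^*}+\lambda c_4 C_2[u_+]_{s,p}^{q}.$$
The condition $\int_\Omega u_+>0$ built into the definition of $M_1$ forces $u_+\not\equiv 0$ and hence $[u_+]_{s,p}>0$, so I divide by $[u_+]_{s,p}^p$ to get
$$\Lambda^{-2}\le C_1[u_+]_{s,p}^{p_s^*-p}+\lambda c_4 C_2[u_+]_{s,p}^{q-p}.$$
Since $p<q<p_s^*$, both exponents on the right are strictly positive, so the right-hand side tends to $0$ as $[u_+]_{s,p}\to 0^+$. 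This produces a uniform constant $D>0$, depending on $\lambda$, $\Lambda$, $c_4$, and the embedding constants but not on $u$, with $[u_+]_{s,p}^p\ge D$.

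The case $u\in K_2$ is completely symmetric: one uses the defining equality of $M_2$ with $u_-$ in place of $u_+$ and argues identically. For $u\in K_3=M_1\cap M_2$, both defining equalities hold simultaneously and the above argument, applied separately to each, delivers $[u_+]_{s,p}^p,[u_-]_{s,p}^p\ge D$. The only subtlety I would watch carefully is the passage $\int_\Omega f(x,u)u_+\,dx=\int_\Omega f(x,u_+)u_+\,dx$ (and the analogous one for $u_-$): it is what allows $(f_2)$, which is stated for a generic $L^q$-argument, to be applied to $u_+$ (resp. $u_-$) when $u$ itself is not sign-definite — this matters precisely in $K_3$ — and relies crucially on $f(x,0)=0$.
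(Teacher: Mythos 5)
Your proposal is correct and follows essentially the same route as the paper: start from the defining equality of $M_i$, bound the nonlocal term below by $\Lambda^{-2}[u_\pm]_{s,p}^p$ via (2.2) and (2.4), bound the right-hand side above by $C[u_\pm]_{s,p}^{p_s^*}+\lambda c_4 C[u_\pm]_{s,p}^{q}$ using $(f_2)$ and the Sobolev embeddings, and conclude from $p<q<p_s^*$ that the seminorm is bounded away from zero. The only cosmetic difference is that the paper merges the two right-hand terms into a single power $r\in\{q,p_s^*\}$ depending on whether $[u_\pm]_{s,p}<1$, while you keep them separate and divide by $[u_\pm]_{s,p}^p$; your explicit remark that $\int_\Omega f(x,u)u_\pm\,dx=\int_\Omega f(x,u_\pm)u_\pm\,dx$ is a point the paper passes over silently, and is a welcome clarification.
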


\begin{proof}
From the definition of $K_{i}$, we have
\begin{align}
\int_{\mathbb{R}^N}\int_{\mathbb{R}^N}
\Phi(u_{\pm}(x)-u_{\pm}(y))(u_{\pm}(x)-u_{\pm}(y))K(x,y)dxdy =\int_\Omega |u_{\pm}|^{p^{\ast}_{s}}dx+\lambda \int_{\Omega}f(x,u)u_{\pm} dx
\end{align}
Using $(f_{2})$, (2.2), (2.4) and Sobolev embedding in (3.6) we get,
\begin{align*}
\begin{split}
\Lambda^{-2}[u_{\pm}]_{s,p}^{p}&\leq {\|u_{\pm}\|}^{p^{\ast}_{s}}_{p^{\ast}_{s}}+c_{4}\|u_{\pm}\|^{q}_{q}\leq \tilde{C}[u_{\pm}]_{s,p}^{r}
\end{split}
\end{align*}
where $r=q$ if $[u_{\pm}]_{s,p} < 1$ or $r=p_{s}^{\ast}$ if $[u_{\pm}]_{s,p}\geq 1$. Hence using the fact that $p < r$, we get the result.
\end{proof}

\begin{lemma}\label{lem4}
For manifolds $M_{i}$ and $K_{i}$ the following holds:\\
(i) $M_{i}$ is a $C^{1}$ sub-manifold of $W_{0}^{s,p}(\Omega)$ of codimension 1, if i=1,2 and 2 if i=3 respectively.\\
(ii) The sets $K_{i}$ are complete, and for every $u\in M_{i}$ we have
\begin{align} 
T_{u}W_{0}^{s,p}(\Omega)=T_{u}M_{i}\oplus span\lbrace u_{+},u_{-}\rbrace
\end{align}
,where $T_{u}M$ is the tangent space at $u$ of the Banach manifold M.\\
(iii) The projection onto the first coordinate in (3.7) is uniformly continuous on bounded sets of $M_{i}$.
\end{lemma}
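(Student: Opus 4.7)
The plan is to treat the three manifolds as regular level sets of the $C^{1}$ functional
\[
\varphi_{1}(u)=\int_{\mathbb{R}^{2N}}\Phi(u_{+}(x)-u_{+}(y))(u_{+}(x)-u_{+}(y))K(x,y)\,dxdy-\int_{\Omega}|u_{+}|^{p_{s}^{\ast}}\,dx-\lambda\int_{\Omega}f(x,u)u_{+}\,dx
\]
(together with the analogous $\varphi_{2}$ obtained by swapping $u_{+}$ and $u_{-}$) intersected with the open cones $\{\int_{\Omega}u_{\pm}>0\}$, so that $M_{1}=\varphi_{1}^{-1}(0)$, $M_{2}=\varphi_{2}^{-1}(0)$, $M_{3}=\varphi_{1}^{-1}(0)\cap\varphi_{2}^{-1}(0)$. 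The whole lemma will then reduce to the master claim that on $K_{i}$ the matrix
\[
J(u)=\begin{pmatrix}\langle D\varphi_{1}(u),u_{+}\rangle & \langle D\varphi_{1}(u),u_{-}\rangle\\ \langle D\varphi_{2}(u),u_{+}\rangle & \langle D\varphi_{2}(u),u_{-}\rangle\end{pmatrix}
\]
is diagonal and non-degenerate, with its diagonal entries bounded away from $0$ uniformly on bounded subsets. The first task is to verify $\varphi_{1},\varphi_{2}\in C^{1}(W_{0}^{s,p}(\Omega),\mathbb{R})$: the kernel $K$ together with the growth bounds (2.2)--(2.3) on $\Phi,\Phi'$ make the nonlocal part $C^{1}$, $(f_{1})$ yields $C^{1}$-dependence through $f$, and $u\mapsto u_{\pm}$ is Lipschitz on $W_{0}^{s,p}(\Omega)$. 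By the submersion theorem, (i) then follows from the diagonal, non-singular shape of $J(u)$.

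The key step, and the main obstacle, is computing $J(u)$. The off-diagonal entries vanish cleanly: the first two terms of $\varphi_{1}$ depend on $u$ only through $u_{+}$, whose directional derivative in direction $u_{-}$ is $u_{-}\chi_{\{u>0\}}=0$ a.e.; the remaining term $-\lambda\int f(x,u)u_{+}$ contributes $-\lambda\int f_{u}(x,u)u_{+}u_{-}=0$ since $u_{+}u_{-}=0$ a.e. For the diagonal entry I would expand, writing $H(t)=\Phi(t)t$,
\[
\langle D\varphi_{1}(u),u_{+}\rangle=\int_{\mathbb{R}^{2N}}H'(u_{+}(x)-u_{+}(y))(u_{+}(x)-u_{+}(y))K\,dxdy\;-\;p_{s}^{\ast}\int_{\Omega}|u_{+}|^{p_{s}^{\ast}}\,dx\;-\;\lambda\int_{\Omega}\bigl(f_{u}(x,u)u_{+}^{2}+f(x,u)u_{+}\bigr)dx,
\]
use the ratio bound $H'(t)t\le\Lambda(a+\Lambda)H(t)$ supplied by (2.2)--(2.3), eliminate the $H$-integral via the constraint $\varphi_{1}(u)=0$ active on $M_{1}$, and apply the right-hand inequality in $(f_{2})$ (which forces $c_{1}^{-1}>p_{s}^{\ast}-1$) to collect signs. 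The admissible range $\Lambda\in[1,(c_{2}/p)^{1/4})$ is exactly what makes the resulting coefficient strictly negative; combined with the lower bound $[u_{\pm}]_{s,p}^{p}\ge D$ from Lemma \ref{lem3}, this produces a strictly negative upper bound, uniform on bounded subsets of $K_{i}$. The absence of homogeneity of $-\mathscr{L}_{\Phi}$ is exactly what makes this step delicate, since no identity of the form $H'(t)t=pH(t)$ is available beyond the fractional $p$-Laplacian; only the sandwich (2.2)--(2.3) is.

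The remaining claims are soft. For completeness of $K_{i}$ in (ii), a $W_{0}^{s,p}$-Cauchy sequence $\{u_{n}\}\subset K_{1}$ has a limit $u$ with $u\ge0$ (the positive cone is closed) and $\varphi_{1}(u)=0$ (continuity of $\varphi_{1}$); Lemma \ref{lem3} applied along the sequence prevents $u_{+}\equiv 0$ in the limit, so $u\in K_{1}$, and analogously for $K_{2},K_{3}$. For the decomposition (3.7), given $v\in W_{0}^{s,p}(\Omega)$ set
\[
\alpha=\frac{\langle D\varphi_{1}(u),v\rangle}{\langle D\varphi_{1}(u),u_{+}\rangle},\qquad \beta=\frac{\langle D\varphi_{2}(u),v\rangle}{\langle D\varphi_{2}(u),u_{-}\rangle},
\]
(with the obvious one-dimensional reduction when $i=1,2$), so that $v-\alpha u_{+}-\beta u_{-}\in T_{u}M_{i}$; uniqueness of the splitting is non-degeneracy of $J(u)$. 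Finally, (iii) is immediate: the maps $u\mapsto D\varphi_{i}(u)\in W^{-s,p'}(\Omega)$ and $u\mapsto u_{\pm}\in W_{0}^{s,p}(\Omega)$ are continuous with uniformly bounded norm on bounded sets, while the key step supplies the uniform positive lower bound on the denominators appearing in $\alpha,\beta$, so the projection $v\mapsto v-\alpha u_{+}-\beta u_{-}$ is uniformly continuous on bounded subsets of $M_{i}$.
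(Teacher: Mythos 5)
Your proposal follows essentially the same route as the paper: both realize $M_i$ as the zero level set of the same functionals $\psi_i$ on the open sets $\{\int_\Omega u_\pm>0\}$, prove regularity by showing the pairing matrix $\bigl(\langle D\psi_i(u),u_\pm\rangle\bigr)$ is diagonal with strictly negative diagonal entries (using the constraint on $M_i$, the bounds (2.2)--(2.3), and $c_1<\tfrac{1}{p_s^*-1}$ from $(f_2)$), and then get completeness from Lemma \ref{lem3} and the splitting (3.7) from exactly the coefficient $\alpha$ you write down. The only real difference is cosmetic and in your favor: you eliminate the nonlocal term directly through the constraint on the $\Phi$-form via $H(t)=\Phi(t)t$, whereas the paper detours through $[u_+]_{s,p}^p$, which makes its substitution step slightly inconsistent.
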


\begin{proof}
(i) Define $\tilde{M_{1}}=\lbrace u\in W_{0}^{s,p}(\Omega):\int_{\Omega}u_{+} > 0\rbrace, \tilde{M_{2}}=\lbrace u\in W_{0}^{s,p}(\Omega):\int_{\Omega}u_{-} > 0\rbrace$ and  $\tilde{M_{3}}=\tilde{M_{1}}\cap\tilde{M_{2}}$. It is easy to see that $\tilde{M_{i}}$ are open and $M_{i} \subset\tilde{M_{i}}$ and hence it is sufficient to show that $M_{i}$ is a regular sub-manifold of $W_{0}^{s,p}(\Omega)$. For this we will construct $C^{1}$ functions $\psi_{i}:\tilde{M_{i}}\rightarrow \mathbb{R}^{d}$, where $d$ denotes the codimension of $M_{i}$, such that $M_{i}$ is the inverse of a regular value of $\psi_{i}$. Define
\begin{align*}
\begin{split}
\psi_{1}(u)=&\int_{\mathbb{R}^N}\int_{\mathbb{R}^N}
\Phi(u_{+}(x)-u_{+}(y))(u_{+}(x)-u_{+}(y))K(x,y)dxdy-\int_\Omega |u_{+}|^{p^{\ast}_{s}}dx\\
&-\lambda \int_{\Omega}f(x,u)u_{+} dx.\\
\psi_{2}(u)=&\int_{\mathbb{R}^N}\int_{\mathbb{R}^N}
\Phi(u_{-}(x)-u_{-}(y))(u_{-}(x)-u_{-}(y))K(x,y)dxdy-\int_\Omega |u_{-}|^{p^{\ast}_{s}}dx\\
&-\lambda \int_{\Omega}f(x,u)u_{-} dx\\
\psi_{3}(u)=&(\psi_{1}(u),\psi_{2}(u)).
\end{split}
\end{align*}
It can be observed that $M_{i}=\psi_{i}^{-1}(\overrightarrow{0})$. Hence, we need to show that $\overrightarrow{0}$ is a regular value of $\psi_{i}$. We will use $(u+\epsilon u_{+})=u_{+}+\epsilon u_{+}$ for proving this. For $u \in M_{1}$,
\begin{align*}
\begin{split}
\langle \nabla \psi_{1}(u),u_{+}\rangle=&\frac{d}{d\epsilon}\psi_{1}(u+\epsilon u_{+})\\
=&\frac{d}{d\epsilon}\bigg[\int_{\mathbb{R}^N}\int_{\mathbb{R}^N}
\Phi((u+\epsilon u_{+})_{+}(x)-((u+\epsilon u_{+})_{+}(y))((u+\epsilon u_{+})_{+}(x)\\
&-(u+\epsilon u_{+})_{+}(y))K(x,y)dxdy-\int_{\Omega} |(u+\epsilon u_{+})_{+}|^{p^{\ast}_{s}}dx\\
&-\lambda \int_{\Omega}f(x,u+\epsilon u_{+})(u+\epsilon u_{+})_{+} dx\bigg] \\
=&\frac{d}{d\epsilon}\bigg[\int_{\mathbb{R}^N}\int_{\mathbb{R}^N}
\Phi((u_{+}+\epsilon u_{+})(x)-(u_{+}+\epsilon u_{+})(y))((u_{+}+\epsilon u_{+})(x)-(u_{+}+\epsilon u_{+})(y))\\
&K(x,y)dxdy-\int_{\Omega} |(u_{+}+\epsilon u_{+})|^{p^{\ast}_{s}}dx-\lambda \int_{\Omega}f(x,u+\epsilon u_{+})(u_{+}+\epsilon u_{+})dx\bigg] \\
=&\int_{\mathbb{R}^N}\int_{\mathbb{R}^N}\bigg[\Phi^{\prime}((u_{+}+\epsilon u_{+})(x)-((u_{+}+\epsilon u_{+})(y))(u_{+}(x)-u_{+}(y))((u_{+}+\epsilon u_{+})(x)\\
&-(u_{+}+\epsilon u_{+})(y))K(x,y)dxdy+\Phi((u_{+}+\epsilon u_{+})(x)-(u_{+}+\epsilon u_{+})(y))\\
&((u_{+}(x)-u_{+}(y))K(x,y)\bigg]-p^{\ast}_{s}(1+\epsilon)^{p^{\ast}_{s}}|u_{+}|^{p^{\ast}_{s}}dx-\lambda \int_{\Omega}f(x,u+\epsilon u_{+})u_{+}dx\\
&-\lambda\int_{\Omega}f_{u}(x,u+\epsilon u_{+})u_{+}^{2}dx
\end{split}
\end{align*}
Since $u\in M_{1}$ so
\begin{align*}
\begin{split}
\frac{d}{d\epsilon}\psi_{1}(u+\epsilon u_{+})\lvert_{\epsilon=0}& =\int_{\mathbb{R}^N}\int_{\mathbb{R}^N}\bigg[\Phi^{\prime}(u_{+}(x)-u_{+}(y))(u_{+}(x)-u_{+}(y))^{2}K(x,y)\\
&+\Phi( u_{+}(x)-u_{+}(y))(u_{+}(x)-u_{+}(y))K(x,y)\bigg]dxdy-\int_{\Omega}p^{\ast}_{s}|u_{+}|^{p^{\ast}_{s}}dx\\
&-\lambda\int_{\Omega}f(x,u)u_{+}dx-\lambda\int_{\Omega}f_{u}(x,u)u_{+}^{2}dx
\end{split}
\end{align*}
%While studying Fourier series W.H.Young has analyzed certain convex functions $\Phi:\mathbb{R}\rightarrow\mathbb{R}^{+}$ which satisfy the condition $\Phi(-x)=\Phi(x), \Phi(0)=0, \underset{x\rightarrow\infty}{\text{lim}}\Phi(x)=+\infty.$ With each such function $\Phi$, one can associate another convex function $\psi:\mathbb{R}\rightarrow\mathbb{R}^{+}$ having similar properties, which is defined by $\psi(y)=\text{sup}\lbrace x|y|-\Phi(x):x\geq 0\rbrace, y\in\mathbb{R}$. This $\Phi$ is called a Young function, and $\psi$ is the complementary function to $\Phi$. From definition of $\psi$, $\psi(0)=0, \psi(-y)=\Phi(y)$ and $\psi(.)$ is a convex increasing function satisfying $\underset{y\rightarrow\infty}{\text{lim}}\psi(y)=+\infty.$\\
 Hence, (2.3) and (2.4)\\
\begin{align*}
\begin{split}
\frac{d}{d\epsilon}\psi_{1}(u+\epsilon u_{+})\lvert_{\epsilon=0}\;&\leq a\Lambda[u_{+}]^{p}_{s,p}+\Lambda^{2}[u_{+}]_{s,p}^{p}-\int_{\Omega}p^{\ast}_{s}|u_{+}|^{p^{\ast}_{s}}dx-\lambda\int_{\Omega}f(x,u)u_{+}dx\\
&-\lambda\int_{\Omega}f_{u}(x,u)u_{+}^{2}dx\\
&=(a\Lambda+\Lambda^{2})[u_{+}]_{s,p}^{p}-\int_{\Omega}p^{\ast}_{s}|u_{+}|^{p^{\ast}_{s}}dx-\lambda\int_{\Omega}f(x,u)u_{+}dx\\
&-\lambda\int_{\Omega}f_{u}(x,u)u_{+}^{2}dx
\end{split}
\end{align*}
We already have $\Lambda\in\left[1,\left(\frac{c_{2}}{p}\right)^{\frac{1}{4}}\right)$ from lemma 3.2. Choose ${\lq{a}\rq}$ such that 
$a\Lambda+\Lambda^{2} < p_{s}^{\ast}$. Hence,
\begin{align*}
\begin{split}
\frac{d}{d\epsilon}\psi_{1}(u+\epsilon u_{+})\lvert_{\epsilon=0}\;&\leq p^{\ast}_{s}\left([u_{+}]_{s,p}^{p}-\int_{\Omega}p^{\ast}_{s}|u_{+}|^{p^{\ast}_{s}}dx\right)-\lambda\int_{\Omega}f(x,u)u_{+}dx\\
&-\lambda\int_{\Omega}f_{u}(x,u)u_{+}^{2}dx\\
&= p^{\ast}_{s}\lambda\int_{\Omega}f(x,u)u_{+}dx-\lambda\int_{\Omega}f(x,u)u_{+}dx-\lambda\int_{\Omega}f_{u}(x,u)u_{+}^{2}dx\\
&= (p^{\ast}_{s}-1)\lambda\int_{\Omega}f(x,u)u_{+}dx-\lambda\int_{\Omega}f_{u}(x,u)u_{+}^{2}dx
\end{split}
\end{align*}
From the assumption $(f_{2})$, there exist $c_{1}\in\left(0,\frac{1}{p^{\ast}_{s}-1}\right)$ such that
\begin{align}
\lambda\int_{\Omega}f(x,u)u_{+}dx \leq c_{1}\lambda\int_{\Omega}f_{u}(x,u)u_{+}^{2}dx < \frac{1}{p^{\ast}_{s}-1}\lambda\int_{\Omega}f_{u}(x,u)u_{+}^{2}dx
\end{align}
Using (3.8) we conclude that 
$\frac{d}{d\epsilon}\psi_{1}(u+\epsilon u_{+})\lvert_{\epsilon=0} < 0$ which further implies $\langle\nabla\psi_{1}(u),u_{+}\rangle < 0$. Consequently, $\nabla\psi_{1}(u)\neq 0$.
Hence, $M_{1}$ is a regular sub-manifold of $W_{0}^{s,p}(\Omega)$.\\
In a similar manner it can be proved that $\langle\nabla\psi_{2}(u),u_{-}\rangle < 0$ for $u\in M_{2}$ implying that $M_{2}$ is also a regular sub-manifold of $W_{0}^{s,p}(\Omega)$.\\
Now if $\langle\nabla\psi_{1}(u),u_{-}\rangle = \langle\nabla\psi_{2}(u),u_{+}\rangle = 0$ for $u\in M_{3}$ then $\langle\nabla\psi_{1}(u),u\rangle < 0$ and $\langle\nabla\psi_{2}(u),u\rangle < 0$ implying that $\nabla\psi_{3}(u)\neq 0$ for $u\in M_{3}$. Therefore,
\begin{align*}
\begin{split}
\langle \nabla \psi_{1}(u),u_{-}\rangle=&\frac{d}{d\epsilon}\psi_{1}(u+\epsilon u_{+})\\
=&\frac{d}{d\epsilon}\bigg[\int_{\mathbb{R}^N}\int_{\mathbb{R}^N}
\Phi((u+\epsilon u_{-})_{+}(x)-(u+\epsilon u_{-})_{+}(y))((u+\epsilon u_{-})_{+}(x)\\
&-(u+\epsilon u_{-})_{+}(y))K(x,y)dxdy-\int_{\Omega} |(u+\epsilon u_{-})_{+}|^{p^{\ast}_{s}}dx\\
&-\lambda \int_{\Omega}f(x,u+\epsilon u_{-})(u+\epsilon u_{-})_{+} dx\bigg] \\
=& \frac{d}{d\epsilon}\bigg[\int_{\mathbb{R}^N}\int_{\mathbb{R}^N}
\Phi(u_{+}(x)-u_{+}(y))(u_{+}(x)-u_{+}(y))K(x,y)dxdy\\
&-\int_{\Omega} |u_{+}|^{p^{\ast}_{s}}dx-\lambda \int_{\Omega}f(x,u+\epsilon u_{-})u_{+}dx\bigg]\\
=& -\lambda\int_{\Omega}f_{u}(x,u+\epsilon u_{-})u_{+}u_{-}dx\\
=&\; 0.
\end{split}
\end{align*}
Similarly, $\langle\nabla\psi_{2}(u),u_{+}\rangle = 0$ and hence $M_{3}$ is also a regular sub-manifold of $W_{0}^{s,p}(\Omega)$.\\
(ii) To show that $K_{i}$ is complete. Let $(u_{n})$ be a Cauchy sequence in $K_{i}$ then $u_{n} \rightarrow u$ in $W_{0}^{s,p}(\Omega)$ which further implies that $(u_{n})_{\pm}\rightarrow u_{\pm}$. By using continuity of the functions, $(f_{2})$ and Lemma 3.3, we conclude that $u\in K_{i}$. Hence, $K_{i}$ is complete.\\
\hspace*{1cm}Now we show that $T_{u}W_{0}^{s,p}(\Omega)=T_{u}M_{1}\oplus span\lbrace u_{+}\rbrace$, where $M_{1}=\lbrace u:\psi_{1}(u)=0\rbrace$ and $T_{u}M_{1}=\lbrace v:\langle \nabla \psi_{1}(u),v\rangle=0\rbrace$. Let $v\in T_{u}W_{0}^{s,p}(\Omega)$ be a unit tangential vector, then $v=v_{1}+v_{2}$ where $v_{2}=\alpha u_{+}$ and $v_{1}=v-v_{2}$. Choose $\alpha=\frac{\langle \nabla \psi_{1}(u),u\rangle}{\langle \nabla \psi_{1}(u),u_{+}\rangle}$. From here we get $v_{1}\in T_{u}M_{1}$. Hence, $\langle \nabla \psi_{1}(u),v_{1}\rangle=0$. Similarly we can show that  $T_{u}W_{0}^{s,p}(\Omega)=T_{u}M_{2}\oplus span\lbrace u_{-}\rbrace$ and $T_{u}W_{0}^{s,p}(\Omega)=T_{u}M_{3}\oplus span\lbrace u_{+},u_{-}\rbrace$.\\
(iii) It is easy to see that the projection onto the first coordinate in (3.7) is uniformly continuous on bounded sets of $M_{i}$.
\end{proof}
\noindent We next prove the following lemma.
\begin{lemma}\label{lem5}
The unrestricted functional $I$ verifies the Palais-Smale condition for energy level $c < \left(\frac{1}{c_{2}}-\frac{1}{p_{s}^{\ast}}\right)\frac{S^{\frac{N}{ps}}}{\Lambda^{2}}$, where $S=\underset{\phi\in C_{c}^{\infty}(\Omega)}{inf}\frac{{\|\phi\|}_{s,p}^{p}}{{\|\phi\|}_{p^{\ast}_{s}}^{p}}$ is the best Sobolev constant for the fractional Laplacian.
\end{lemma}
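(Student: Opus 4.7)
The plan is to follow the classical Brézis--Nirenberg strategy adapted to the fractional setting, combined with the concentration--compactness principle for the fractional $p$-Laplacian as proved in \cite{Bonder}. Let $(u_n)\subset W_0^{s,p}(\Omega)$ be a Palais--Smale sequence at level $c$, i.e.\ $I(u_n)\to c$ and $I'(u_n)\to 0$ in $W_0^{-s,p'}(\Omega)$.

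First I would establish that $(u_n)$ is bounded in $W_0^{s,p}(\Omega)$. The natural combination to use is
\begin{equation*}
I(u_n)-\frac{1}{c_2}\langle I'(u_n),u_n\rangle,
\end{equation*}
because the assumption $(f_2)$, specifically $c_2\int F(x,u)\,dx\leq \int f(x,u)u\,dx$, makes the nonlinear $f$-term nonnegative after this combination. Using \eqref{e4}, \eqref{e6} together with the inequality $c_2<p_s^\ast$ from $(f_2)$ and the constraint $\Lambda\in[1,(c_2/p)^{1/4})$, one gets a bound of the form $[u_n]_{s,p}^p\leq C(1+[u_n]_{s,p})$, hence boundedness. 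Up to a subsequence we then obtain $u_n\rightharpoonup u$ in $W_0^{s,p}(\Omega)$, $u_n\to u$ a.e., and $u_n\to u$ in $L^r(\Omega)$ for $r\in[1,p_s^\ast)$, and standard monotonicity arguments identify $u$ as a weak solution of $(P)$.

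The core of the proof is ruling out the critical-growth loss of compactness. I would invoke the concentration--compactness lemma for fractional Sobolev spaces (as in \cite{Bonder}, \cite{Cantizano}), which yields nonnegative measures $\mu$, $\nu$ and at most countable atoms located at points $\{x_j\}_{j\in J}$ such that
\begin{equation*}
|u_n(x)-u_n(y)|^p K(x,y)\rightharpoonup \mu\geq |u(x)-u(y)|^p K(x,y)+\sum_j \mu_j \delta_{x_j},
\end{equation*}
\begin{equation*}
|u_n|^{p_s^\ast}\rightharpoonup \nu=|u|^{p_s^\ast}+\sum_j \nu_j\delta_{x_j}, \qquad \nu_j^{p/p_s^\ast}\leq \frac{1}{S}\,\Lambda^2\,\mu_j,
\end{equation*}
where the factor $\Lambda^2$ comes from the two-sided bound \eqref{e5} on $K$ and \eqref{e4} on $\Phi$. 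To test concentration at $x_j$, I would take a smooth cutoff $\phi_{\varepsilon,j}$ supported near $x_j$, plug $\phi_{\varepsilon,j}u_n$ into $\langle I'(u_n),\cdot\rangle\to 0$, and pass $n\to\infty$ and then $\varepsilon\to 0$; the subcritical term $\lambda\int f(x,u_n)\phi_{\varepsilon,j}u_n\,dx$ vanishes by $(f_2)$ and compact embedding, and a standard estimate on the nonlocal bilinear form (where continuity of translations in $W^{s,p}$ is used to absorb the cross terms) gives $\mu_j=\nu_j$.

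Combining $\mu_j=\nu_j$ with the Sobolev--type inequality $\nu_j^{p/p_s^\ast}\leq \Lambda^2 S^{-1}\mu_j$ forces either $\nu_j=0$ or
\begin{equation*}
\nu_j\geq \left(\frac{S}{\Lambda^2}\right)^{\!N/(sp)}.
\end{equation*}
Finally, I would use the identity
\begin{equation*}
c=\lim_n\Big(I(u_n)-\tfrac{1}{c_2}\langle I'(u_n),u_n\rangle\Big)\geq \Big(\tfrac{1}{c_2}-\tfrac{1}{p_s^\ast}\Big)\sum_j\nu_j,
\end{equation*}
which, together with the energy ceiling $c<(\tfrac{1}{c_2}-\tfrac{1}{p_s^\ast})S^{N/(sp)}/\Lambda^2$, forces $J=\emptyset$. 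Therefore $\int_\Omega |u_n|^{p_s^\ast}\to \int_\Omega |u|^{p_s^\ast}$, which in $L^{p_s^\ast}$ combined with a.e. convergence gives $u_n\to u$ in $L^{p_s^\ast}(\Omega)$, and then the monotonicity/coercivity of the operator $-\mathscr{L}_\Phi$ together with \eqref{e4} upgrades the weak convergence to strong convergence in $W_0^{s,p}(\Omega)$.

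The main obstacle will be the step $\mu_j=\nu_j$: the concentration--compactness results in the literature are written precisely for the fractional $p$-Laplacian, but here the operator is the more general $-\mathscr{L}_\Phi$ with a kernel $K$ only comparable to $|x-y|^{-(N+sp)}$. One has to show that the general double-integral quantities
\begin{equation*}
\int\!\!\int \Phi(u_n(x)-u_n(y))(v(x)-v(y))K(x,y)\,dx\,dy
\end{equation*}
admit the same concentration analysis, using the two-sided bounds \eqref{e4} and \eqref{e5} to sandwich them between the $p$-Laplacian quantities (this is exactly where the factor $\Lambda^2$ in the threshold enters). All other steps are routine.
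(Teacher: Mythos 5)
Your proposal follows essentially the same route as the paper's proof: boundedness of the Palais--Smale sequence via the combination $I(u_n)-\frac{1}{c_2}\langle I'(u_n),u_n\rangle$ under the constraint $\Lambda\in[1,(c_2/p)^{1/4})$, the concentration--compactness lemma for fractional Sobolev spaces, testing $I'(u_n)$ against a cutoff concentrated at each atom $x_j$ to force the dichotomy $\nu_j=0$ or $\nu_j$ bounded below and hence rule out concentration below the stated energy level, and finally upgrading weak to strong convergence via the coercivity bounds \eqref{e4} and \eqref{e5}. The one substantive caveat is that for the general operator $-\mathscr{L}_\Phi$ the cutoff test yields only the one-sided estimate $\Lambda^{-2}\mu_j\leq\nu_j$ rather than the equality $\mu_j=\nu_j$ you state (equality is unattainable since $\Phi$ and $K$ are merely comparable to the $p$-Laplacian data), but this is precisely the sandwich argument you anticipate in your closing paragraph, it is what the paper actually uses, and it suffices to close the argument.
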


\begin{proof}
Let $(u_{n})$ be a Palais-Smale sequence. Then $I(u_{n})\rightarrow c$ and $I^{\prime}(u_{n})\rightarrow 0$. Let us assume that $\|u_{n}\|\rightarrow \infty$ as $n \rightarrow\infty$. We have
\begin{align*}
\begin{split}
c&\geq \underset{n\rightarrow\infty}{\text {lim}} I(u_{n})-\frac{1}{c_{2}}\langle I^{\prime}(u_{n}),u_{n}\rangle\\
&=\underset{n\rightarrow\infty}{\text {lim}}\int_{\mathbb{R}^N}\int_{\mathbb{R}^N}
\mathcal{P}\Phi(u_{n}(x)-u_{n}(y))K(x,y)dx dy -\frac{1}{p_{s}^{\ast}}\int_\Omega |u_{n}|^{p_{s}^{\ast}}-\lambda \int_{\Omega} F(x,u_{n})dx\\
&-\frac{1}{c_{2}}\left(\int_{\mathbb{R}^N}\int_{\mathbb{R}^N}
\Phi(u_{n}(x)-u_{n}(y))(u_{n}(x)-u_{n}(y))K(x,y)dxdy -\int_\Omega |u_{n}|^{p^{\ast}_{s}}dx\right.\\
&\left.-\lambda \int_{\Omega}f(x,u_{n})u_{n}dx\right)\\
&\geq \underset{n\rightarrow\infty}{\text {lim}}\;\frac{1}{p\Lambda^{2}}\int_{\mathbb{R}^N}\int_{\mathbb{R}^N}\frac{|u_{n}(x)-u_{n}(y)|^{p}}{|x-y|^{N+ps}}dx dy-\frac{1}{p_{s}^{\ast}}\int_{\Omega}|u_{n}|^{p_{s}^{\ast}}-\lambda\int_{\Omega}F(x,u_{n})dx+\frac{1}{c_{2}}\int_{\Omega}|u_{n}|^{p_{s}^{\ast}}\\
&-\frac{\Lambda^{2}}{c_{2}}\int_{\mathbb{R}^N}\int_{\mathbb{R}^N}\frac{|u_{n}(x)-u_{n}(y)|^{p}}{|x-y|^{N+ps}}dx dy+\frac{\lambda}{c_{2}}\int_{\Omega}f(x,u_{n})u_{n}dx\\
&=\underset{n\rightarrow\infty}{\text {lim}}\left(\frac{1}{\Lambda^{2}p}-\frac{\Lambda^2}{c_{2}}\right){\|u_{n}\|}_{s,p}^{p}+\left(\frac{1}{c_{2}}-\frac{1}{p^{\ast}_{s}}\right)\int_{\Omega}|u_{n}|^{p_{s}^{\ast}}dx+\frac{\lambda}{c_{2}}\int_{\Omega}[f(x,u_{n})u_{n}-c_{2}F(x,u_{n})]dx
\end{split}
\end{align*}
Using $(f_{2})$ and the fact that $c_{2} < p_{s}^{\ast}$ we get 
$c > \left(\frac{1}{\Lambda^{2}p}-\frac{\Lambda^{2}}{c_{2}}\right){\|u_{n}\|}_{s,p}^{p}$. Thus, we get a contradiction for $\left(\frac{1}{\Lambda^{2}p}-\frac{\Lambda^{2}}{c_{2}}\right) > 0$.
Hence, $ (u_{n})$ is bounded in $W_{0}^{s,p}(\Omega)$ whenever $\Lambda\in\left[1,\left(\frac{c_{2}}{p}\right)^{\frac{1}{4}}\right)$. By the reflexivity property of $W_{0}^{s,p}(\Omega)$ there exists a weakly convergent subsequence say $(u_{n})\rightharpoonup u$ in $W_{0}^{s,p}(\Omega)$. This implies
$\Rightarrow u_{n}\rightarrow u$ in $L^{q}(\Omega)\;; q\in[1,p_{s}^{\ast})$ as $n \rightarrow\infty$.\\
In order to prove the strong convergence in $L^{p_{s}^{\ast}}(\Omega)$, we will use concentration compactness principle for the fractional sobolev space which is as follows [refer \cite{Mosconi}].
\begin{theorem}
Let $(u_{n})$ be a bounded sequence in $W_{0}^{s,p}(\Omega)$. Then upto a subsequence there exists $u\in W_{0}^{s,p}(\Omega)$, Borel measures $\mu$ and $\nu$, $J$ denumerable, $x_{j}\in \overline{\Omega}, \nu_{j} > 0, \mu_{j}\geq 0$ with $\mu_{j}+\nu_{j} > 0$ for $j\in J$ and $u_{n}\rightharpoonup u$ in $W_{0}^{s,p}(\Omega)$ and is strongly convergent in $L^{p}(\Omega)$.
$|D^{s}u_{n}|^{p}\overset{\ast}{\rightharpoonup}\mu, |u_{n}|^{p_{s}^{\ast}}\overset{\ast}{\rightharpoonup}\nu, d\mu \geq |D^{s}u|^{p}+\sum_{j\in J}\mu_{j}\delta_{x_{j}}, d\nu=|u_{n}|^{p_{s}^{\ast}}+\sum_{j\in J}\nu_{j}\delta_{x_{j}}$ and $\mu_{j}\geq S{\nu_{j}}^{\frac{p}{p_{s}^{\ast}}}$. Here, $\mu_{j}=\mu({x_{j}})$ and $\nu_{j}=\mu({x_{j}})$.
\end{theorem}
Hence, if we show that the index set $J$ is empty then $u_{n}\rightarrow u$ in $L^{p^{\ast}_{s}}(\Omega)$. Let $\xi$ be a unit ball such that $\xi_{j,\delta}(x)=\xi\left(\frac{|x-x_{j}|}{\delta}\right)$ where $\xi$ has a compact support in $B_{2}, \xi=1$ in $B_{1}$ and $0\leq\xi\leq 1$. Since $(u_{n})$ is a PS sequence corresponding to the functional $I$, hence $I^{\prime}(u_{n})\rightarrow 0$. Therefore, on considering test function as $\xi_{j,\delta}u_{n}$ we have $\langle I^{\prime}(u_{n}),\xi_{j,\delta}u_{n}\rangle \rightarrow 0$. Now,
\begin{align}
\begin{split}
\langle I^{\prime}(u_{n}),\xi_{j,\delta}u_{n}\rangle=&\int_{\mathbb{R}^{N}}\int_{\mathbb{R}^{N}}\Phi(u_{n}(x)-u_{n}(y))(\xi_{j,\delta}u_{n}(x)-\xi_{j,\delta}u_{n}(y))K(x,y)dx dy\\
&-\int_{\Omega}|u_{n}|^{p_{s}^{\ast}-2}u_{n}\xi_{j,\delta}dx-\lambda\int_{\Omega}f(x,u_{n})\xi_{j,\delta}u_{n} dx
\end{split}
\end{align}
The first term of the RHS of equation (3.9) can be written as 
\begin{align*}
\begin{split}
&\int_{\mathbb{R}^{N}}\int_{\mathbb{R}^{N}}\Phi(u_{n}(x)-u_{n}(y))(\xi_{j,\delta}u_{n}(x)-\xi_{j,\delta}u_{n}(y))K(x,y)dx dy=\\
&\int_{\mathbb{R}^{N}}\int_{\mathbb{R}^{N}}\Phi(u_{n}(x)-u_{n}(y))(u_{n}(x)-u_{n}(y))\xi_{j,\delta}(x)K(x,y)dx dy\\
+&\int_{\mathbb{R}^{N}}\int_{\mathbb{R}^{N}}\Phi(u_{n}(x)-u_{n}(y))(\xi_{j,\delta}(x)-\xi_{j,\delta}(y))u_{n}(y)K(x,y)dx dy\\
=&I+II
\end{split}
\end{align*}
We further have,
\begin{align*}
\begin{split}
I&=\int_{\mathbb{R}^{N}}\int_{\mathbb{R}^{N}}\Phi(u_{n}(x)-u_{n}(y))(u_{n}(x)-u_{n}(y))\xi_{j,\delta}(x)K(x,y)dx dy\\
&\geq \Lambda^{-2}\int_{\mathbb{R}^N}\int_{\mathbb{R}^N}\frac{|u_{n}(x)-u_{n}(y)|^{p}}{|x-y|^{N+ps}}\xi_{j,\delta}(x)dx dy\\
&=\Lambda^{-2}\int_{\mathbb{R}^{N}}|D^{s}u_{n}|^{p}\xi_{j,\delta}(x)dx
\end{split}
\end{align*}
So, $\underset{\delta\rightarrow 0}{\text{lim}}\;\underset{n\rightarrow\infty}{\text{lim}}I \geq \Lambda^{-2}\mu_{j}$. From \cite{Bonder}, it can be seen that $\underset{\delta\rightarrow 0}{\text{lim}}\;\underset{n\rightarrow\infty}{\text{lim}}II=0$. Hence, for the first term of RHS in (3.9) we have
\begin{align*}
\underset{\delta\rightarrow 0}{\text{lim}}\;\underset{n\rightarrow\infty}{\text{lim}}\int_{\mathbb{R}^{N}}\int_{\mathbb{R}^{N}}\Phi(u_{n}(x)-u_{n}(y))(\xi_{j,\delta}u_{n}(x)-\xi_{j,\delta}u_{n}(y))K(x,y)dx dy \geq \Lambda^{-2}\mu_{j}
\end{align*}
Also, $\underset{\delta\rightarrow 0}{\text{lim}}\;\underset{n\rightarrow\infty}{\text{lim}}\int_{\Omega}|u_{n}|^{p_{s}^{\ast}}\xi_{j,\delta}dx=\nu_{j}$ and $\underset{\delta\rightarrow 0}{\text{lim}}\;\underset{n\rightarrow\infty}{\text{lim}}\int_{\Omega}f(x,u_{n})u_{n}\xi_{j,\delta}dx=0$.\\
Thus, from (3.9), we have $\Lambda^{-2}\mu_{j}-\nu_{j}\leq 0$ which further implies $\mu_{j}\leq \frac{\nu_{j}}{\Lambda^2}$. On using the relation $\mu_{j}\geq S{\nu_{j}^{\frac{p}{p^{\ast}_{s}}}}$ we get either $\nu_{j}=0$ or $\nu_{j}\geq \frac{S^{\frac{N}{ps}}}{\Lambda^2}$. Let us assume that $\nu_{j}\geq \frac{S^{\frac{N}{ps}}}{\Lambda^2}$, then using the fact that norm is weakly lower semi-continuous we have
\begin{align*}
\begin{split}
c=&\underset{n\rightarrow\infty}{\text{lim}}I(u_{n})\\
=& \underset{n\rightarrow\infty}{\text{lim}}I(u_{n})-\frac{1}{c_{2}}\langle I^{\prime}(u_{n}),u_{n}\rangle\\
\geq& \underset{n\rightarrow\infty}{\text{lim}}\left(\frac{1}{c_{2}}-\frac{1}{p^{\ast}_{s}}\right)\int_{\Omega}|u_{n}|^{p_{s}^{\ast}}dx\\
\geq& \underset{n\rightarrow\infty}{\text{lim inf}}\left(\frac{1}{c_{2}}-\frac{1}{p^{\ast}_{s}}\right)\int_{\Omega}|u_{n}|^{p_{s}^{\ast}}dx\\
\geq&\left(\frac{1}{c_{2}}-\frac{1}{p^{\ast}_{s}}\right)\left(\int_{A_{\epsilon}}|u|^{p_{s}^{\ast}}+\sum_{j\in J}\nu_{j}\right)\\
\geq& \left(\frac{1}{c_{2}}-\frac{1}{p^{\ast}_{s}}\right)\nu_{j}\\
\geq&\left(\frac{1}{c_{2}}-\frac{1}{p^{\ast}_{s}}\right)\frac{S^{\frac{N}{ps}}}{\Lambda^2}
\end{split}
\end{align*}
Hence, for $c < \left(\frac{1}{c_{2}}-\frac{1}{p^{\ast}_{s}}\right)\frac{S^{\frac{N}{ps}}}{\Lambda^2}$, the index set $J$ is empty as $\nu_{j}=0$ and since $\mu_{j}\leq \frac{\nu_{j}}{\Lambda^2}$ so $\mu_{j}=0$. Therefore $\int_{\Omega}|u_{n}|^{p^{\ast}_{s}}\rightarrow \int_{\Omega}|u|^{p^{\ast}_{s}}$. We also know that $W_0^{s, p}(\Omega)$ is compactly embedded in
$L^r(\Omega)$, $r\in[1,p_s^*)$ and hence $u_n\rightarrow u$ in
$L^r(\Omega)$, for $1\leq r <p_s^*$. Consider $\tilde{u}_n = u_n-u$.
Then $ \tilde{u}_n \rightharpoonup 0$ in $W_0^{s, p}(\Omega)$.
Consider,
\begin{align}
\begin{split}
\langle I'(\tilde{u}_n),\tilde{u}_n\rangle
&=\int_{\mathbb{R}^N}\int_{\mathbb{R}^N}\Phi(\tilde{u}_n(x)-\tilde{u}_n(y))(\tilde{u}_n(x)-\tilde{u}_n(y))K(x,y)dxdy\\
&-\lambda\int_\Omega |\tilde{u}_n|^{p^{\ast}_{s}}dx-\int_\Omega
f(x,\tilde{u}_n)\tilde{u}_n dx
\end{split}
\end{align}
The second and the third term of the functional approach to $0$ as
$n\rightarrow\infty$ since $u_n\rightarrow u$ in $L^{p_s^*}(\Omega)$.  Also by $(f_{2})$, we see that the third term in (3.10) will also converge to $0$. Hence from (3.10), we have first term in RHS of (3.10) will converge to zero.\\
But
\begin{align}
0 \leq \frac{1}{\Lambda^{2}}\|\tilde{u}_n\|^p_{W_0^{s,
p}(\Omega)}\leq
\int_{\mathbb{R}^N}\int_{\mathbb{R}^N}\Phi(\tilde{u}_n(x)-\tilde{u}_n(y))(\tilde{u}_n(x)-\tilde{u}_n(y))K(x,y)dxdy
\end{align}
From (3.11) it is clear that $\displaystyle{\lim_{n\rightarrow
\infty}\|\tilde{u}_n\|_{W_0^{s, p}(\Omega)}}=0$, that is
$\tilde{u}_n \rightarrow 0$ in $W_0^{s, p}(\Omega)$. Hence
$u_n\rightarrow u$ in $W_0^{s, p}(\Omega)$ and the unrestricted functional $I$ satisfies Palais-Smale condition.
\end{proof}
\noindent We now give a lemma which is a consequence of the Lemma 3.6.
\begin{lemma}
The functional $I|_{K_{i}}$ satisfies the Palais-Smale condition for every energy level $c < \left(\frac{1}{c_{2}}-\frac{1}{p^{\ast}_{s}}\right)\frac{S^{\frac{N}{ps}}}{\Lambda^2}$.  Then $u$ is also a critical point of the functional $I$ and hence a solution of the problem (1.1).
\end{lemma}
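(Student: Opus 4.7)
The plan is to reduce the constrained Palais--Smale condition on $K_i$ to the unrestricted one proved in Lemma \ref{lem5}, and to deduce the second assertion by a Lagrange multiplier argument. Two claims must be established: (A) every Palais--Smale sequence $(u_n)\subset K_i$ for $I|_{K_i}$ at level $c<\bigl(\tfrac{1}{c_2}-\tfrac{1}{p^{\ast}_{s}}\bigr)S^{N/ps}/\Lambda^{2}$ has a subsequence converging strongly in $K_i$; (B) if $u$ is a critical point of $I|_{K_i}$, then $I'(u)=0$ in $W_0^{-s,p'}(\Omega)$, hence $u$ solves $(1.1)$.

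Since $M_i$ is a regular zero set of $\psi_i$ (Lemma \ref{lem4}(i)), the hypothesis $\nabla(I|_{K_i})(u_n)\to 0$ translates via the Lagrange multiplier rule into
\[
I'(u_n)-\mu_n^{1}\nabla\psi_1(u_n)-\mu_n^{2}\nabla\psi_2(u_n)\longrightarrow 0\quad\text{in }W_0^{-s,p'}(\Omega),
\]
with the convention that the absent multiplier is zero when $i=1,2$. The heart of the argument is to show $\mu_n^{j}\to 0$. Testing this identity against $u_{n,+}$ and $u_{n,-}$ produces a linear system for the multipliers whose off-diagonal entries $\langle\nabla\psi_1(u_n),u_{n,-}\rangle$ and $\langle\nabla\psi_2(u_n),u_{n,+}\rangle$ were already shown to vanish in the proof of Lemma \ref{lem4}(i), while the diagonal entries satisfy
\[
\langle\nabla\psi_j(u_n),u_{n,\pm}\rangle\leq (p^{\ast}_{s}-1)\lambda\int_{\Omega}f(x,u_n)u_{n,\pm}\,dx-\lambda\int_{\Omega}f_u(x,u_n)u_{n,\pm}^{2}\,dx.
\]
Assumption $(f_2)$, with its sharp constant $c_1<1/(p^{\ast}_{s}-1)$, combined with the uniform lower bound $[u_{n,\pm}]_{s,p}^{p}\geq D$ from Lemma \ref{lem3} and the boundedness of $(u_n)$ implied by Lemma \ref{lem2}, forces $\langle\nabla\psi_j(u_n),u_{n,\pm}\rangle\leq -\gamma<0$ uniformly in $n$.

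Next I would verify that $\langle I'(u_n),u_{n,\pm}\rangle=o(1)$ along the sequence: on $K_i$ the positive and negative parts $u_{n,+},u_{n,-}$ have essentially disjoint supports, so after cancellation of the mixed-sign contribution in the nonlocal double integral the defining constraint $\psi_j(u_n)=0$ identifies this quantity, up to a vanishing error, with $\psi_j(u_n)$. Solving the resulting diagonal system then yields $\mu_n^{j}\to 0$, so $I'(u_n)\to 0$ in $W_0^{-s,p'}(\Omega)$. Lemma \ref{lem5} now produces a subsequence $u_n\to u$ strongly in $W_0^{s,p}(\Omega)$, and the completeness of $K_i$ asserted in Lemma \ref{lem4}(ii) places $u\in K_i$, which settles (A). Claim (B) is the special case of the same computation with $o(1)$ replaced by $0$ throughout.

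The principal obstacle is the uniform nondegeneracy of the Lagrange multiplier system along the Palais--Smale sequence; the sharp bound $c_1<1/(p^{\ast}_{s}-1)$ in $(f_2)$ together with Lemma \ref{lem3} is indispensable here, and the admissible range $\Lambda\in[1,(c_2/p)^{1/4})$ of Lemma \ref{lem2} enters through the Sobolev estimate that keeps the diagonal entries bounded away from zero. A secondary technical subtlety is the precise handling of the cross-term $\int\!\!\int[\Phi(u(x)-u(y))-\Phi(u_{\pm}(x)-u_{\pm}(y))](u_{\pm}(x)-u_{\pm}(y))K(x,y)\,dx\,dy$ that is needed to relate $\langle I'(u),u_{\pm}\rangle$ to the constraint $\psi_j(u)=0$ when both $u_{+}$ and $u_{-}$ are nontrivial, as in the case $K_3$.
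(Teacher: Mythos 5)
The paper offers no proof of this lemma at all---it is merely asserted to be ``a consequence'' of Lemma \ref{lem5}---so your proposal cannot be checked against an argument in the text; what you have written is the standard constrained-to-unconstrained reduction (the route of De N\'apoli--Fern\'andez Bonder--Silva and of Cantizano--Silva, both cited here) that the authors implicitly rely on. Your overall strategy is the correct one: express $\nabla(I|_{K_i})(u_n)$ via Lagrange multipliers, test against $u_{n,+}$ and $u_{n,-}$, use the vanishing of the off-diagonal pairings and the strict negativity of the diagonal ones from Lemma \ref{lem4} to force the multipliers to zero, then invoke Lemma \ref{lem5} and the completeness of $K_i$; the transfer of criticality from $I|_{K_i}$ to $I$ is indeed the same computation with exact zeros.

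There is, however, one step that does not work as you describe, and it is precisely the nonlocal crux. You claim that $\langle I'(u_n),u_{n,+}\rangle$ coincides with $\psi_1(u_n)=0$ ``up to a vanishing error'' because $u_{n,+}$ and $u_{n,-}$ have essentially disjoint supports. Disjointness of supports buys nothing for a nonlocal operator: the discrepancy
\[
\int_{\mathbb{R}^N}\int_{\mathbb{R}^N}\left[\Phi(u_n(x)-u_n(y))-\Phi(u_{n,+}(x)-u_{n,+}(y))\right](u_{n,+}(x)-u_{n,+}(y))K(x,y)\,dx\,dy
\]
lives on the set of pairs $(x,y)$ at which $u_n$ changes sign, and for the fractional $p$-Laplacian it is strictly positive for every sign-changing function, with no reason to vanish along the sequence. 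On $K_1$ and $K_2$ this is harmless since there $u=\pm u_{\pm}$ and $\psi_j(u)=\langle I'(u),u\rangle$ exactly; but on $K_3=M_3$ every element is sign-changing, and the identity $\langle I'(u_n),u_{n,\pm}\rangle=o(1)$ is exactly what must be proved (or the manifold must instead be defined by the conditions $\langle I'(u),u_{\pm}\rangle=0$, as in Cantizano--Silva, which is not what Definition \ref{defn1} says). A secondary gap: your uniform bound $\langle\nabla\psi_j(u_n),u_{n,\pm}\rangle\leq-\gamma<0$ ultimately rests on a uniform positive lower bound for $\lambda\int_\Omega f(x,u_n)u_{n,\pm}\,dx$, and this does not follow from Lemma \ref{lem3} alone, since a lower bound on $[u_{n,\pm}]_{s,p}$ gives an upper, not a lower, bound on $\|u_{n,\pm}\|_{L^q(\Omega)}$; one must rule out the scenario in which the constraint is sustained entirely by the critical term while the $f$-term degenerates, which requires an additional argument using the energy restriction.
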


\noindent {\it{Proof of theorem 1.1.}} Since $I$ is bounded below on $K_{1}$. Therefore by Ekeland's variational principle, there exists $v_{k}\in K_{1}$ such that $I(v_{k})\rightarrow c:= \underset{K_{1}}{\text{inf}}$ and $(I|_{K_{1}})^{\prime}(v_{k})\rightarrow 0$.\\
We have to check if we choose $\lambda$ large, we have $c < \left(\frac{1}{c_{2}}-\frac{1}{p^{\ast}_{s}}\right)\frac{S^{\frac{N}{ps}}}{\Lambda^2}$. It follows from Lemma 3.1. For instance, for $c$ we have that for choosing $w_{0} \geq 0, c\leq I(t_{\lambda}w_{0})\leq \frac{t_{\lambda}^{p}\Lambda^{2}}{p}[w_{0}]_{s,p}^{p}$. Also, it follows from the estimate of $t_{\lambda}$ in Lemma 3.1 that $c\rightarrow 0$ as $\lambda\rightarrow\infty$.Hence, there exists $\lambda^{\prime}(p.p_{s}^{\ast},n,c_{3})$ such that for all $\lambda > \lambda^{\prime}, c< \left(\frac{1}{c_{2}}-\frac{1}{p^{\ast}_{s}}\right)\frac{S^{\frac{N}{ps}}}{\Lambda^2}$. The other cases are analogous. From the construction of $K_{i}$ it can be seen that one solution is negative, one is positive and one is sign changing. Hence the theorem is proved.

\section*{Conclusion}
The problem involving an integro-differential operator and a critical exponent has been studied. As a result of this study we conclude the existence of three nontrivial solutions.

\section*{Acknowledgement}
The author  Amita Soni thanks the Department of
Science and Technology (D. S. T), Govt. of India for financial
support. Both the authors also acknowledge the facilities received
from the Department of mathematics, National Institute of Technology
Rourkela.

\bibliographystyle{apa}

{\sc Amita Soni} and {\sc D. Choudhuri}\\
Department of Mathematics,\\
National Institute of Technology Rourkela, Rourkela - 769008,
India\\
e-mails: soniamita72@gmail.com and dc.iit12@gmail.com.
\end{document}